\newtheorem{theorem}{Theorem}[section]
\newtheorem{lemma}[theorem]{Lemma}
\newtheorem{proposition}[theorem]{Proposition}
\newtheorem{corollary}[theorem]{Corollary}
\newtheorem{definition}[theorem]{Definition}
\theoremstyle{remark}
\newcommand{\Div}{\operatorname{Div}}
\newcommand{\supp}{\operatorname{supp}}
\newcommand{\Li}{\operatorname{Li}}
\newcommand{\cC}{{\mathcal C}}
\newcommand{\cF}{{\mathcal F}}
\newcommand{\cM}{{\mathcal M}}
\newcommand{\cO}{{\mathcal O}}
\newcommand{\cP}{{\mathcal P}}
\newcommand{\cT}{{\mathcal T}}
\newcommand{\CC}{{\mathbb C}}
\newcommand{\PP}{{\mathbb P}}
\newcommand{\QQ}{{\mathbb Q}}
\newcommand{\ZZ}{{\mathbb Z}}
\renewcommand{\a}{\alpha}
\newcommand{\eps}{\epsilon}
\begin{document}
\title[E\~ne product in the  transalgebraic class]{E\~ne product in the  transalgebraic class}

\subjclass[2000]{08A02, 30D99, 30F99.} \keywords{E\~ne product, exponential singularities, transalgebraic class.}

\author[R. P\'{e}rez-Marco]{Ricardo P\'{e}rez-Marco}

\address{Ricardo P\'erez-Marco\newline 
\indent  Institut de Math\'ematiques de Jussieu-Paris Rive Gauche, \newline
\indent CNRS, UMR 7586, \newline
\indent  Sorbonne Universit\'e, B\^at. Sophie Germain, \newline 
\indent 75205 Paris, France}

\email{ricardo.perez-marco@imj-prg.fr}

\maketitle

{ \centerline{\sc Abstract}}


\begin{minipage}{14cm}
\noindent
We define transalgebraic functions on a compact Riemann surface as meromorphic functions 
except at a finite number of punctures where they have finite order exponential singularities. This transalgebraic class 
is a topological multiplicative group. We extend the action of the  e\~ne product to the transcendental class on the Riemann sphere. 
This transalgebraic class, modulo constant functions, is a commutative ring for the multiplication, as the additive structure, and 
the e\~ne product, as the multiplicative structure. In particular, 
the divisor action of the e\~ne product by multiplicative convolution extends to these 
transalgebraic divisors. The polylogarithm hierarchy appears related 
to transalgebraic e\~ne poles of higher order.
\end{minipage}


\tableofcontents

\bigskip
\bigskip
\bigskip

\begin{figure}[h]
\centering
\resizebox{12cm}{!}{\includegraphics{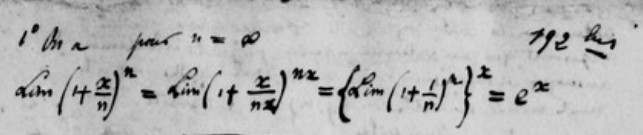}}    
\caption{From \'Evariste Galois' manuscripts, Ms 2108, Institut de France, \cite{Ga}.} 
\end{figure}

\newpage

\section{Introduction and Euler miscellanea} \label{sec:eule}

One of Euler's major discoveries is the transalgebraic nature of the
exponential function as the unique ``polynomial'', normalized to take the value $1$ at $0$ as well as 
its derivative, with only one zero of infinite
order at $\infty$ (see  \cite{Eu}). Euler writes
$$
e^{-z} =\left ( 1-\frac{z}{\infty} \right )^{+\infty} \ .
$$

Hence, the exponential appears as a ``transalgebraic polynomial''.
Observe that in this heuristic formula both infinite symbols are of a different nature: One is an infinite 
point ($\infty$ in the Riemann sphere) and the other an infinite number ($+\infty$ is the infinite order of 
the zero). Obviously, the proper justification of this heuristic comes from  Euler formula,
$$
e^z=\lim_{n\to +\infty} \left(1+\frac{z}{n}\right )^n 
$$
A one line proof, assuming known the existence of the elementary limit for $e=2.71828182846\ldots $
$$
e=\lim_{n\to +\infty} \left(1+\frac{1}{n}\right )^n 
$$
can be found in Galois manuscripts \cite{Ga} (these may be course notes from 
his professor P.-L. \'E. Richard at Lyc\'ee Louis-le-Grand,  so the authorship is unclear),
$$
\lim_{n\to +\infty} \left(1+\frac{z}{n}\right )^n = \lim_{n\to +\infty} \left(1+\frac{z}{nz}\right )^{nz}
=\lim_{n\to +\infty} \left [\left(1+\frac{1}{n}\right )^n\right ]^z =e^z   
$$
Note, that we do not want to use the logarithm function in the proof since the proper order to develop the theory is to define the exponential first.
We also have a purely geometric proof that is the proper geometrization of Euler's transalgebraic heuristics, by using 
Carath\'eodory's convergence Theorem of the uniformizations for the Carath\'eodory convergence of the log-Riemann 
surfaces of $\sqrt[n]{z}$ to the log-Riemann surface of $\log z$ when $n\to +\infty$ (see \cite{BPM1} for details, and 
\cite{BPM2} for background on log-Riemann surfaces). The construction of the log-Riemann
surface of $\log z$ does not require the previous definition of the logarithm (on the contrary, we can define the logarithm function from it).

\medskip

The main property of the exponential can be derived ``\`a la Euler'' as follows 
(using $\infty^{-2} << \infty^{-1}$):

\begin{align*}
 e^{z_1}.e^{z_2} &= \left ( 1+\frac{z_1}{\infty} \right )^{+\infty} . \left ( 1+ \frac{z_2}{\infty} \right )^{+\infty} \\
&=\prod_{+\infty} \left ( 1+\frac{z_1}{\infty} \right ). \left ( 1+\frac{z_2}{\infty} \right ) \\
&=\prod_{+\infty} \left ( 1+\frac{z_1 +z_2}{\infty} + \frac{z_1 .z_2}{\infty^2} \right ) \\
&=\prod_{+\infty} \left ( 1+\frac{z_1 +z_2}{\infty} \right ) \\
&=\left ( 1+\frac{z_1 +z_2}{\infty} \right )^{+\infty} \\
&=e^{z_1+z_2} 
\end{align*}

The exponential function is the link between the additive and the multiplicative 
structure on $\CC$. It serves also as the link between the multiplicative and the e\~ne product 
structure. 

\medskip

\textbf{The e\~ne product.}

\medskip

We briefly recall the definition of the e\~ne product (see \cite{PM1}). 
Given two polynomials $P,Q\in \CC[z]$, normalized such that 
$$
P(0)=Q(0)=1
$$
say
\begin{align*}
P(z) &=1+a_1 z+a_2 z^2+\ldots =\prod_{\alpha} \left (1-\frac{z}{\alpha}\right )\\ 
Q(z) &=1+b_1 z+b_2 z^2+\ldots =\prod_{\beta} \left (1-\frac{z}{\beta}\right )
\end{align*}
where $(\alpha)$ and $(\beta)$ are the respective zeros counted with multiplicity,
then we define the e\~ne product (\cite{PM1}) by
$$
P\star Q(z)=\prod_{\alpha, \beta} \left (1-\frac{z}{\alpha \beta}\right )
$$
Therefore, the divisor of $P\star Q$ is the multiplicative convolution of the divisors of $P$ 
and $Q$. If we write,
$$
P\star Q(z) =1+c_1 z+c_2 z^2+\ldots 
$$
then for $n\geq 1$,
$$
c_n=C_n(a_1, a_2,\ldots, a_n, b_1, b_2, \ldots, b_n)=-n a_n b_n +\ldots
$$
where $C_n\in \ZZ[X_1, \ldots, X_n, Y_1,\ldots, Y_n]$ is a universal polynomial with integer 
coefficients and the dots in the left side of the formula represents a polynomial on $a_1, \ldots 
,a_{n-1}, b_1,\ldots ,b_{n-1}$ (see \cite{PM1}). This allows to define the e\~ne product $f\star g$ of two 
formal power series 
\begin{align*}
f(z) &=1+a_1 z+a_2 z^2+\ldots \\ 
g(z) &=1+b_1 z+b_2 z^2+\ldots 
\end{align*}
with coefficients $a_n, b_n\in A$, in a general commutative ring $A$ by 
$$
f\star g(z) =1+c_1 z+c_2 z^2+\ldots 
$$
with $c_n=C_n(a_1, a_2,\ldots, a_n, b_1, b_2, \ldots, b_n)$ (so without any 
reference to the zeros that we don't have in this general setting).

We refer to \cite{PM1} for the rich algebraic and analytic properties of the 
e\~ne product. From the previous Eulerian heuristic of the exponential of a polynomial with an infinite order zero at $\infty$, it is 
natural to expect that the e\~ne product with an exponential 
must make sense and be an exponential. We do have a much more precise result: The 
exponential linearizes the e\~ne product. More precisely, we have a linear  
the exponential form of the e\~ne product (Theorem 4.1 from \cite{PM1}) in the 
following sense: If we write
the power series in exponential form
\begin{align*}
f(z) &=e^{F(z)} \\ 
g(z) &=e^{G(z)} 
\end{align*}
with formal power series (that have a finite non-zero radius of convergence when $f$ and $g$ 
are polynomials),
\begin{align*}
F(z) &=F_1 z+F_2 z^2+\ldots \\ 
G(z) &=G_1 z+G_2 z^2+\ldots 
\end{align*}
then we have
$$
f\star g(z) = e^{H(z)}
$$
with
$$
H(z)=-F_1 G_1 z -2 F_2 G_2 z^2 -3 F_3 G_3 z^3 +\ldots=-\sum_{k=0}^{+\infty} kF_k G_k \, z^k
$$
We denote $\star_e$ the linearized exponential form of the e\~ne product
$$
F\star_e G (z)=-\sum_{k=0}^{+\infty} kF_k G_k \, z^k
$$
Note that we could have defined the e\~ne product for polynomials in this formal way, but we would miss  
the original interpretation  with the convolution of zeros (in particular because
the disk of convergence of the exponential form never contains zeros!).

\medskip

With similar heuristic ideas as before we can derive \`a la Euler the exponential form of the
e\~ne product (for a rigorous proof see Theorem 4.1 from \cite{PM1}).
Write as before
\begin{align*}
f(z) &= e^{F(z)}=\left (1+\frac{F(z)}{\infty}\right )^{+\infty} \\
g(z) &= e^{G(z)}=\left (1+\frac{G(z)}{\infty }\right )^{+\infty}
\end{align*}
Then, note the double $\infty$ on the products and $\infty^{-3}<<\infty^{-2}$, and compute

\begin{align*}
f\star g(z) &= \left (1+\frac{F(z)}{\infty}\right )^{+\infty} \star
\left (1+\frac{G(z)}{\infty }\right )^{+\infty}  \\
&= \prod_{+\infty , +\infty } \left (1+\sum_{k\geq 1} \frac{F_k}{\infty}
z^k \right ) \star \left (1+\sum_{k\geq 1} \frac{G_k}{\infty}
z^k \right ) \\
&= \prod_{+\infty , +\infty } \left ( 1+ \sum_k \left ( -k \frac{F_k}{\infty} .
\frac{G_k}{\infty} + \cO \left (\frac{1}{\infty^3} \right )\right ) z^k \right ) \\
&=\prod_{+\infty , +\infty} \left ( 1+ \frac{1}{\infty . \infty}
\sum_{k\geq 1}  -k  F_k G_k \, z^k \right ) \\
&=\left ( 1+\frac{(F\star_e G) (z)}{\infty .\infty} \right )
^{(+\infty) .(+\infty)} \\
&= e^{(F \star_e G)(z)} 
\end{align*}

Motivated by these transalgebraic heuristic considerations, the  purpose of this 
article is to extend the e\~ne product to the class of transalgebraic functions, a class of 
functions with exponential singularities that we define precisely in next section. 
This is a good demonstration of the dual analytic-algebraic 
character of the e\~ne product.

\pagebreak

\section{The transalgebraic class on a compact Riemann surface.} \label{sec:comp}

We first define exponential singularities. The exponential singularities of finite order play the role of zeros of infinite order
following Euler's heuristics.
We are mostly interested in this section in the case of the Riemann sphere, or $1$-dimensional projective space over $\CC$, 
$X=\overline{\CC}=\PP^1(\CC)$ (genus $g=0$), but there is little extra effort to 
define the transalgebraic class of functions $\cT(X)$ for a general compact Riemann surface $X$. 
We denote $\cM(X)$ the space of meromorphic functions on $X$, and $\cM(X)^*$ the non-zero meromorphic functions.

\begin{definition}
A point $z_0\in  \CC$ is an exponential singularity of $f$ if for some neighborhood $U$ of $z_0$, 
$f$ is a holomorphic function $f:U-\{z_0\}\to \CC$, $f$ has no zeros nor poles on $U$ and $f$ does not extend 
meromorphically to $U$. 

The exponential singularity $z_0\in \CC$ of $f$ is of finite order $1\leq d=d(f,z_0) < +\infty$ if $d$ is the minimal integer  such that 
$$
\limsup_{z\to z_0} |z-z_0|^d \log |f(z)| <+\infty
$$
If no such finite order $d$ exists, the exponential singularity is of infinite order and $d=d(f,z_0)=+\infty$.

Let $X$ be a Riemann surface. A point $z_0\in X$ is an exponential singularity for $f$ if it is an exponential 
singularity in a local chart. The order 
$d\geq 1$ has the same definition and is independent of the local chart. 
\end{definition}

Observe that we cannot have $d=0$ because $f$ would be bounded in a pointed neighborhood of $z_0$ and by Riemann's removability Theorem $f$ 
will have an holomorphic extension at $z_0$.
Note also that the definition means that $f$ has no monodromy around $z_0$ (i.e. $f$ is holomorphic in a pointed neighborhood of $z_0$),  
and that $z_0$ is not a regular point nor a pole for $f$.
Also  $z_0$ is an exponential singularity for $f$ if and only if it is also one for $f^{-1}$. We have a more precise 
result.

\begin{proposition}
The point $z_0\in X$ is an exponential singularity for $f$ if and only if there is a local chart $z$ where 
$z_0=0$ and $f$ can be written in a neighborhood $U$ of $0$  as
$$
f(z)=z^ne^{h(z)}
$$
where $n\in \ZZ$ and $h:U-\{0\}\to \CC$ is holomorphic. The integer $n\in \ZZ$ if the residue of the logarithmic differential $d\log f=f'/f\, dz$ at $z=0$.
The order $d$ is finite if and only if  $h$ is meromorphic with a pole of order $d$ at $z_0$. The order of 
the pole of the logarithmic differential of $f$ at $z_0$ is $d+1\geq 2$.
\end{proposition}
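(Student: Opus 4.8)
The plan is to prove both directions of the equivalence and then track the local invariants. First I would assume $z_0$ is an exponential singularity and produce the representation $f(z) = z^n e^{h(z)}$. Working in a local chart with $z_0 = 0$, by definition $f$ is holomorphic and zero-free on a punctured disk $U - \{0\}$, so the logarithmic differential $\omega = d\log f = (f'/f)\,dz$ is a holomorphic $1$-form on $U - \{0\}$. Let $n = \operatorname{Res}_0 \omega = \frac{1}{2\pi i}\oint f'/f$; this is an integer because it counts the winding number of $f$ around small loops (equivalently, $f$ has trivial monodromy, so $\int f'/f$ over a loop is in $2\pi i \ZZ$). Then $\omega - n\,dz/z$ is a holomorphic $1$-form on $U - \{0\}$ with zero residue at $0$, hence it has a holomorphic primitive $h$ on $U - \{0\}$ (possibly after shrinking $U$ to a disk): $dh = \omega - n\,dz/z$. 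Then $d\log(f/z^n) = dh$, so $f(z) = C z^n e^{h(z)}$ for a constant $C \neq 0$, and absorbing $C$ into $h$ (replacing $h$ by $h + \log C$) gives the stated form. Conversely, if $f(z) = z^n e^{h(z)}$ with $h$ holomorphic on the punctured disk, then $f$ is holomorphic and zero-free there; $f$ extends meromorphically to $0$ iff $e^{h(z)}$ does, which (since $e^{h}$ is zero-free and pole-free) happens iff $e^{h}$ extends holomorphically, i.e. iff $h$ extends holomorphically — so if $z_0$ is genuinely an exponential singularity, $h$ does not extend holomorphically, matching the definition.

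Next I would identify $n$ with the residue of $d\log f$: this is immediate from the construction above, since $d\log f = n\,dz/z + dh$ and $dh$ is exact on the punctured disk hence residue-free, so $\operatorname{Res}_0 d\log f = n$. For the order statement, I would relate $d(f,z_0)$ to the pole order of $h$. Since $\log|f(z)| = n\log|z| + \operatorname{Re} h(z)$, and $n\log|z|$ is $O(\log|z|)$ which is negligible for the $\limsup$ condition (multiplying by $|z|^d \to 0$ for any $d \geq 1$), we have $\limsup_{z\to 0}|z|^d \log|f(z)| < +\infty$ iff $\limsup_{z\to 0}|z|^d \operatorname{Re} h(z) < +\infty$. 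If $h$ is meromorphic with a pole of order exactly $m$ at $0$, write $h(z) = c\,z^{-m} + (\text{lower order})$ with $c \neq 0$; then $\operatorname{Re} h$ grows like $|z|^{-m}$ in modulus (along suitable rays $\operatorname{Re}(c z^{-m})$ attains values comparable to $|z|^{-m}$), so the minimal $d$ making $|z|^d \operatorname{Re} h$ bounded is $d = m$. Conversely, if the order $d$ is finite then $|z|^d \operatorname{Re} h(z)$ is bounded near $0$, and I would argue this forces $h$ to be meromorphic: boundedness of $\operatorname{Re} h$ after multiplication by a power of $z$ — together with $h$ holomorphic on the punctured disk — rules out an essential singularity. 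The cleanest route is via the Laurent expansion $h(z) = \sum_{k \in \ZZ} a_k z^k$: estimate the coefficients $a_k$ for $k < -d$ by Cauchy's formula on a circle of radius $r$, using $|a_k| \leq r^{-k} \max_{|z|=r}|h(z)|$. Here the subtlety is that the hypothesis controls $\operatorname{Re} h$, not $|h|$; but a Borel–Carathéodory type inequality bounds $\max_{|z|=r}|h(z)|$ in terms of $\max_{|z|=2r}\operatorname{Re} h(z)$ (plus a value of $h$ at an interior point), converting the real-part bound into a modulus bound $\max_{|z|=r}|h(z)| = O(r^{-d})$; then Cauchy gives $a_k = 0$ for $k < -d$, so $h$ has a pole of order at most $d$ at $0$, and combined with the previous paragraph the pole order is exactly $d$.

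Finally, the statement about the logarithmic differential: from $d\log f = n\,dz/z + h'(z)\,dz$ and $h$ having a pole of order $d$ at $0$, the derivative $h'$ has a pole of order $d+1$ at $0$, which dominates the simple pole coming from $n\,dz/z$; hence $d\log f$ has a pole of order exactly $d+1$, and since $d \geq 1$ this is $\geq 2$. (The reason the $n\,dz/z$ term cannot cause cancellation: a pole of order $d+1 \geq 2$ in $h'$ can never be cancelled by a simple pole.)

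I expect the main obstacle to be the converse direction of the order characterization — showing that finite order $d$ forces $h$ to be meromorphic rather than merely having a controlled essential singularity. The one-sided nature of the hypothesis (it bounds $\operatorname{Re} h$, or really $\log|f|$, from above after scaling, but a priori says nothing about how negative $\operatorname{Re} h$ can be) means a naive Cauchy estimate does not directly apply, and invoking a Borel–Carathéodory inequality (or an equivalent Phragmén–Lindelöf / Hadamard three-circles argument on the harmonic function $\operatorname{Re} h$) is the technical heart of the proof. Everything else — the existence of the representation, the identification of $n$ as a residue, and the pole-order bookkeeping for $d\log f$ — is routine complex analysis on a punctured disk.
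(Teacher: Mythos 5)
Your proof is correct and follows essentially the same route as the paper: extract the integer residue $n$ of $d\log f$ from the absence of monodromy around $z_0$, integrate the residue-free part of $f'/f\,dz$ to obtain $h$ with $f=z^ne^{h}$, and read off the order $d$ from the polar part of $h$. The one place you go beyond the paper is the Borel--Carath\'eodory (harmonic-estimate) step showing that the one-sided bound $\limsup |z|^d\log|f(z)|<+\infty$ forces $h$ to be genuinely meromorphic with pole order at most $d$; the paper asserts this equivalence without argument, so your extra detail fills in a real gap rather than taking a different route.
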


\begin{proof}
The problem is local and we  can assume $z_0=0$ and take $U$ a simply connected neighborhood of $0$ in a local chart with local variable $z$ 
where $f$ has no zeros nor poles on $U$. Then, the logarithmic derivative $f'/f$ is 
holomorphic on $U-\{0\}$ and has a Laurent expansion
$$
\frac{f'(z)}{f(z)} =\sum_{n\in \ZZ} a_n z^n
$$
The residue $a_{-1}$ is an integer $n\in \ZZ$ since $f$ has no monodromy around $0$. We take for $h$ the holomorphic function on $U-\{z_0\}$
defined by
$$
h(z)=c_0+\sum_{n\in \ZZ^*} \frac{a_{n-1}}{n} z^{n}
$$
where $c_0\in \CC$ is an arbitrary constant to be chosen later. Then we have
$$
\frac{f'(z)}{f(z)} =\frac{n}{z}+h'(z)
$$
and, choosing the constant $c_0$ properly we have
$$
f(z)=z^ne^{h(z)} 
$$
as desired. Conversely, such an expression has clearly an exponential singularity at $z_0=0$.
From this expression, the order $d$ is finite if and only if $h$ has finite polar part of order $d$ which is equivalent to have a pole of order $d+1$ for 
the logarithmic derivative of $f$.
\end{proof}

\begin{proposition}
If $f$ has an exponential singularity at $z_0$ then in any pointed neighborhood of $z_0$ the function $f$ takes any value $c\in \CC^*$ 
infinitely often.
\end{proposition}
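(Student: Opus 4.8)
The plan is to reduce the statement to the classical big Picard theorem by using the structural description of exponential singularities provided by the previous proposition. By that proposition, after choosing a suitable local chart with $z_0=0$, we may write $f(z)=z^n e^{h(z)}$ on a pointed neighborhood $U-\{0\}$, where $n\in\ZZ$ and $h:U-\{0\}\to\CC$ is holomorphic but does \emph{not} extend holomorphically (nor meromorphically) to $0$; equivalently, $e^{h(z)}$ itself has an essential or exponential singularity at $0$ (if $h$ extended holomorphically, $f$ would be meromorphic at $z_0$, contradicting that $z_0$ is an exponential singularity). So the heart of the matter is the behaviour of $e^{h}$ near an isolated non-removable, non-polar singularity.

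The key steps, in order, are as follows. First, invoke the representation $f(z)=z^ne^{h(z)}$ and note that $0$ is a genuine (non-removable, non-polar) isolated singularity of the holomorphic function $g(z):=e^{h(z)}$ on $U-\{0\}$. Second, observe that $g$ omits the value $0$ on the whole pointed neighborhood. If $g$ also omitted some value $a\in\CC^*$ in every pointed neighborhood of $0$, then by the big Picard theorem $g$ would have at worst a pole at $0$, hence $h$ would have at worst a logarithmic-type behaviour incompatible with $g$ being single-valued and meromorphic — more precisely, $g$ meromorphic at $0$ forces $g$ to extend meromorphically, contradicting that $z_0$ is an exponential singularity of $f$. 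Therefore $g$ takes every value in $\CC^*$ infinitely often in each pointed neighborhood. Third, transfer this to $f$: since $f(z)=z^n g(z)$ with $z^n\to$ a nonzero finite or infinite value only at $0$, for any prescribed $c\in\CC^*$ and any pointed neighborhood $V$ of $0$, solving $f(z)=c$ amounts to solving $g(z)=c z^{-n}$; one runs a normal-families / Picard argument directly on $f$ (or equivalently notes that $z^n$ is a nonvanishing holomorphic unit on $V-\{0\}$, so $f$ itself omits at most one value near $0$ by big Picard, and it already omits $0$, hence omits no value of $\CC^*$).

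Concretely I would phrase the final step as a direct application of the big Picard theorem to $f$ on $U-\{0\}$: $f$ is holomorphic there, has an essential singularity at $0$ (it is not meromorphic at $z_0$ by hypothesis, and an isolated singularity of a holomorphic function that is neither removable nor a pole is essential), so in every pointed neighborhood of $0$ the function $f$ assumes every complex value, with at most one exception, infinitely often. Since $f$ has no zeros on $U$, the value $0$ is the one possible exception, and every $c\in\CC^*$ is therefore attained infinitely often.

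The main obstacle is purely bookkeeping rather than conceptual: one must be careful that the singularity of $f$ at $z_0$ really is \emph{essential} in the Casorati–Weierstrass/Picard sense — i.e. isolated, non-removable, and not a pole — which is exactly guaranteed by the definition of exponential singularity (holomorphic and single-valued in a pointed neighborhood, not regular, not a pole) together with the previous proposition, so there is a small but necessary step verifying that "exponential singularity" implies "essential isolated singularity of the holomorphic map $f$", after which big Picard applies verbatim.
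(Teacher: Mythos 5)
Your proposal is correct and ultimately coincides with the paper's argument: the paper's proof is a one-line application of the big Picard theorem, using that $f$ is holomorphic with an essential (non-removable, non-polar) isolated singularity at $z_0$ and omits the value $0$ there, so every $c\in\CC^*$ is taken infinitely often. The detour through the factorization $f(z)=z^ne^{h(z)}$ is harmless but unnecessary; your final paragraph alone is the paper's proof.
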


\begin{proof}
This is a direct application of Picard's Theorem since $f$ does not take the values $0, \infty \in \overline{\CC}$ in a small pointed neighborhood of $z_0$. 
\end{proof}

\begin{definition}\label{def:punctured}
For a compact Riemann surface $X$ and a finite number of punctures $S\subset X$, we define the space $\cF (X,S)$ as the set of 
non-zero meromorphic functions $f$ on $X-S$ such that  $S$ is a set of zeros, poles, or exponential 
essential singularities of $f$ or of its meromorphic extension. We define also
$$
\cF (X)=\bigcup_{S\subset X; S \, \textrm{finite}} \cF (X,S)
$$
\end{definition}

\textbf{Examples.}

\medskip
$\bullet$ For $X=\overline{\CC}$ and $S=\emptyset$ we have that $\cF(\overline{\CC}, \emptyset)=\CC(z)^*$ is the space of non-zero rational functions. 

\smallskip
$\bullet$ For $X=\overline{\CC}$ and $S=\{\infty\}$, $\cF(\overline{\CC}, \{\infty\})$ is the set of functions $f(z)=R(z) e^{h(z)}$ where 
$R\in \CC(z)^*$ and $h:\CC\to \CC$ is an entire function, in the chart $z\in \CC =\overline{\CC}-\{\infty\}$. 
The exponential singularity at infinite is of finite order $d\geq 1$ if and only if $h\in \CC[z]$ is a polynomial of degree $d\geq 1$.
For $d=0$, $h$ must be constant and $f$ is a non-zero rational function.

\medskip

\begin{proposition}
The spaces $\cF (X,S)$ and $\cF (X)$ endowed with the multiplication of functions are multiplicative abelian groups.
If $f\in \cF(X)$ then $f$ has a finite number of zeros, poles and exponential singularities. 

\end{proposition}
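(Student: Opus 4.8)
The plan is to prove the two assertions separately, since the group structure and the finiteness statement require somewhat different arguments.

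For the group structure, the key observation is that for fixed $X$ and $S$, membership in $\cF(X,S)$ is closed under multiplication and inversion. First I would note that if $f,g\in\cF(X,S)$ then $fg$ and $f/g$ are non-zero meromorphic functions on $X-S$ (the set $S$ being common to both), so the only issue is to check the behavior at each puncture $z_0\in S$. Away from $S$ a product or quotient of meromorphic functions is meromorphic, so no new singularities are created. At a puncture $z_0$, I would invoke the local normal form of Proposition 2.3: write $f(z)=z^{n}e^{h_1(z)}$ and $g(z)=z^{m}e^{h_2(z)}$ in a common local chart on a pointed neighborhood of $z_0$, where $h_1,h_2$ are holomorphic on the pointed neighborhood. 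Then $fg(z)=z^{n+m}e^{h_1(z)+h_2(z)}$ and $f/g(z)=z^{n-m}e^{h_1(z)-h_2(z)}$, which are again of the same type, hence $z_0$ is a zero, pole, or exponential singularity of $fg$ and of $f/g$. (If $h_1+h_2$ extends holomorphically and $n+m=0$, the singularity disappears, which is still allowed since $S$ is only required to \emph{contain} the singular set, not to equal it; this is consistent with the examples, e.g. $\cF(\overline\CC,\{\infty\})\supset\CC(z)^*$.) The constant function $1$ is the identity, associativity and commutativity are inherited from multiplication of functions, so $\cF(X,S)$ is an abelian group. For $\cF(X)=\bigcup_S\cF(X,S)$, given $f\in\cF(X,S_1)$ and $g\in\cF(X,S_2)$, both lie in $\cF(X,S_1\cup S_2)$ (enlarging the puncture set is harmless, as extra punctures are simply regular points), so $fg,f/g\in\cF(X,S_1\cup S_2)\subset\cF(X)$; thus $\cF(X)$ is an abelian group as well.

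For the finiteness statement, fix $f\in\cF(X,S)$ for some finite $S$. By definition $f$ is meromorphic on $X-S$, and $X-S$ is an open subset of the compact Riemann surface $X$; more precisely $f$ extends meromorphically to $X-S'$ where $S'\subseteq S$ is the actual set of exponential singularities, a finite set. On the compact surface $X$, a non-zero meromorphic function (here defined off the finite set $S'$) has only finitely many zeros and poles: away from $S'$ this is the standard fact that zeros and poles of a non-constant meromorphic function on a compact Riemann surface are isolated, hence finite in number by compactness of $X-$(small disks around $S'$); and the points of $S'$ themselves contribute at most $|S'|<+\infty$ more exceptional points. Since $S'\subseteq S$ is finite, the total number of zeros, poles and exponential singularities of $f$ is finite.

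The main obstacle, such as it is, lies in the first part: one must be careful that the local normal form $f=z^{n}e^{h}$ of Proposition 2.3 is stated for a \emph{single} function, and when multiplying two functions one needs the charts to be chosen compatibly. Since both normal forms can be taken in any local chart around $z_0$ (the integer $n$ is the residue of $d\log f$ and is chart-independent, and $h$ is determined up to an additive constant), one simply fixes one chart at $z_0$ and applies Proposition 2.3 to $f$ and to $g$ in that same chart; then the addition of the holomorphic (or meromorphic) parts $h_1,h_2$ is immediate, and the order at $z_0$ of the product satisfies $d(fg,z_0)\le\max(d(f,z_0),d(g,z_0))$ with equality unless the leading polar parts cancel. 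A secondary point to address is that $\cF(X,S)$ as defined does not require $S$ to be minimal, so one should remark explicitly that this causes no difficulty for the group axioms — it is precisely what makes the union $\cF(X)$ a group rather than merely a groupoid.
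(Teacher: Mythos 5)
Your proof is correct and follows essentially the same route as the paper: closure under products and inverses checked locally at the punctures, and finiteness of zeros and poles by a compactness argument on $X$ (your normal-form computation at the punctures is simply a more explicit version of the paper's appeal to the earlier remark on $f^{-1}$). The only step you leave implicit is why zeros and poles cannot accumulate at a point of $S'$: you should say that the ``small disks around $S'$'' are taken inside the pointed neighborhoods that, by the very definition of an exponential singularity, are free of zeros and poles — this is exactly the point the paper's proof makes explicit.
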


\begin{proof}
These spaces are multiplicative groups from the remark made previously that $z_0\in X$ is an exponential singularity for $f$ if and only if it is one for $f^{-1}$.
The finiteness of exponential singularities follows from the finiteness of $S$.
By compactness, an infinite sequence of zeros must have an accumulation point on $X$. It cannot accumulate a point of $X-S$ or $f$ would be identically zero. It can neither
accumulate a point of $S$ since they all have small zero free pointed neighborhoods. Hence we must have a finite number of zeros. The same argument for poles 
(or applied to $f^{-1}$) gives a finite number of them. 
\end{proof}

\begin{definition}
Let $X$ be a compact Riemann surface, $S\subset X$ a finite subset, and $n\geq 0$. 
We define $\cF_n(X)\subset \cF(X)$ (resp. $\cF_n(X, S)\subset \cF(X, S)$) as the subset of functions having at most $n$ zeros and poles 
(both \underline{not} counted with multiplicity), and exponential singularities (resp. with the exponential singularities located at $S$). We define 
$\cT_n(X)\subset \cF_n(X)\subset \cF(X)$, resp. $\cT_n(X, S)\subset \cF_n(X,S)\subset \cF(X,S)$,  as 
the subset of functions with finite order exponential singularities.

We have the filtrations
\begin{align*}
\cF(X) &= \bigcup_{n\geq 0} \cF_n(X) \\
\cF(X, S) &= \bigcup_{n\geq 0} \cF_n(X,S) 
\end{align*}
We define the class of transalgebraic functions $\cT(X)$ and $\cT(X,S)$ by
\begin{align*}
\cT(X) &= \bigcup_{n\geq 0} \cT_n(X) \\
\cT(X,S) &= \bigcup_{n\geq 0} \cT_n(X,S) 
\end{align*}
We define also $\cM_n(X) \subset \cF_n(X)$ to be the subset of meromorphic functions. We have, for $n\geq 0$,
\begin{align*}
\cM_n(X) &\subset \cT_n(X) \subset \cF_n(X) \\
\cM_n(X) &\subset \cT_n(X,S) \subset \cF_n(X,S) \\
\end{align*}
and 
\begin{align*}
\cM(X) &\subset \cT(X) \subset \cF(X)  \\
\cM(X) &\subset \cT(X,S) \subset \cF(X,S)   
\end{align*}
\end{definition}

\textbf{Remarks.}

$\bullet$ The space $\cF_0(X)=\CC^*$ is the set of non-zero constant functions. 
\medskip

$\bullet$ For $X=\overline{\CC}$ and $n=1$, $\cF_1(\overline{\CC})$ is the set of functions 
which are Moebius conjugated to some $e^{h(z)}$ where $h$ is an entire function. To see this, observe first that  
the function cannot be a constant nor a rational non constant function since for such a function
the number of zeros and poles would be at least $2$. For $f\in \cF_1(\overline{\CC})$ We can send the unique singularity to $\infty$ by a Moebius map, 
and $f$ would be of the form $e^{h(z)}$ with $h$ holomorphic in $\CC$. It follows that $\cT_1(\overline{\CC})$ are 
those functions Moebius conjugated to $e^{h(z)}$ where $h(z)\in \CC[z]$ is a polynomial.

\medskip

\begin{proposition}
The spaces $\cT (X)$ and $\cT(X,S)$ endowed with the multiplication are  groups. 
\end{proposition}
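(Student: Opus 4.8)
The plan is to lean on the preceding Proposition, which already establishes that $\cF(X)$ and $\cF(X,S)$ are multiplicative abelian groups. Since $\cT(X)\subset\cF(X)$ and $\cT(X,S)\subset\cF(X,S)$ by definition, it then suffices to verify that these subsets are closed under multiplication and inversion and contain the constant function $1$. The last point is immediate: $\cT_0(X)=\CC^*$ (constants have no exponential singularities, so the finite-order condition is vacuous), so $1\in\cT(X)\cap\cT(X,S)$.

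For inversion I would work locally at each exceptional point of $f\in\cT_n(X)$. A zero of $f$ is a pole of $f^{-1}$ and vice versa, so the finite set of zeros and poles is unchanged (without multiplicity). If $z_0$ is an exponential singularity of $f$, then by the local normal form Proposition we may write $f(z)=z^m e^{h(z)}$ in a chart centred at $z_0$, with $m\in\ZZ$ and $h$ meromorphic at $z_0$ with a pole of order $d=d(f,z_0)<+\infty$; then $f^{-1}(z)=z^{-m}e^{-h(z)}$ and $-h$ is meromorphic with a pole of the same order $d$, so $z_0$ is an exponential singularity of $f^{-1}$ of the same finite order, still located in $S$ when $f\in\cT_n(X,S)$. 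Hence the set of zeros, poles and exponential singularities of $f^{-1}$ coincides with that of $f$, all of finite order, giving $(\cT_n(X))^{-1}=\cT_n(X)$ and $(\cT_n(X,S))^{-1}=\cT_n(X,S)$.

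For the product, let $f\in\cT_n(X)$ and $g\in\cT_m(X)$. The key observation is that the set of zeros and poles of $fg$ is contained in the union of the corresponding sets of $f$ and $g$, and that every exponential singularity of $fg$ lies in the union of those of $f$ and of $g$: away from these finitely many points $f$ and $g$, hence $fg$, are holomorphic and nonzero. It then remains to check, point by point, that $fg$ has only finite-order exponential singularities. Near any such point, in a common local chart centred there, I would use the normal form in the slightly extended shape $f(z)=z^{m_1}e^{h_1(z)}$, $g(z)=z^{m_2}e^{h_2(z)}$ with $m_1,m_2\in\ZZ$ and $h_1,h_2$ holomorphic off the point and meromorphic there, where a genuine finite-order exponential singularity of order $d$ means $h_i$ has a pole of order exactly $d$, while the holomorphic case (pole order $0$) absorbs the cases where the point is a zero, a pole, or a regular point of that factor. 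Then $fg(z)=z^{m_1+m_2}e^{h_1(z)+h_2(z)}$ with $h_1+h_2$ again meromorphic there, so if the singularity of $fg$ at this point is not removable it has finite order, at most $\max(d(f,\cdot),d(g,\cdot))$. Counting points without multiplicity, $fg$ has at most $n+m$ zeros, poles and exponential singularities, all of finite order, so $fg\in\cT_{n+m}(X)$; and if $f\in\cT_n(X,S)$, $g\in\cT_m(X,S)$ the exponential singularities of $fg$ stay inside $S$, so $fg\in\cT_{n+m}(X,S)$. Thus $\cT_n(X)\cdot\cT_m(X)\subset\cT_{n+m}(X)$ and likewise with $S$, and taking the union over $n\geq 0$ shows $\cT(X)$ and $\cT(X,S)$ are subgroups of $\cF(X)$ and $\cF(X,S)$.

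The only mildly delicate bookkeeping — and it is not a real obstacle — is handling points that are exceptional for one factor but not the other (for instance an exponential singularity of $f$ that happens to be a pole of $g$); this is exactly what the ``extended normal form'' above, allowing holomorphic $h_i$ of pole order $0$, takes care of uniformly, since the sum of two germs of the form $z^{m_i}e^{h_i}$ with $h_i$ meromorphic is again of that form.
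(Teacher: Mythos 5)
Your proof is correct and takes essentially the same route as the paper, whose own argument is just the one-line appeal to the earlier remark that exponential singularities and their finite orders are preserved under inversion, together with the local normal form $z^m e^{h}$ with $h$ meromorphic; you simply spell out the details for products and for the bookkeeping of exceptional points. (The only slip is the word ``sum'' in your final sentence, where you clearly mean the product $z^{m_1}e^{h_1}\cdot z^{m_2}e^{h_2}=z^{m_1+m_2}e^{h_1+h_2}$.)
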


\begin{proof}
From previous remarks they are invariant by taking inverses and their are clearly multiplicative invariant. 
\end{proof}

We define transalgebraic divisors.

\begin{definition}
For $f\in \cF(X)$, $S(f)$ denotes the set of exponential singularities of $f$. We define the transalgebraic divisor of $f$ as 
the formal sum
$$
\Div(f) = \sum_{\rho \in X} n_\rho . (\rho ) +\sum_{\rho \in S(f)} d_\rho .(\rho )_\infty
$$
where $n_\rho \in \ZZ$ is the positive, resp. negative, order of the zero, resp. pole, at $\rho$ 
or $n_\rho$ is the residue of the logarithmic derivative of $d\log f$ at $\rho$, or $n_\rho =0$ if $\rho$ is neither a zero nor pole nor 
an exponential singularity, and $1\leq d_\rho\leq +\infty$ is the order of the 
exponential singularity at $\rho$ when $\rho$ is an exponential singularity, i.e. 
$d_\rho +1$ is the  order of the polar part of $d\log f$ at $\rho$. The integer $d_\rho=d_\rho(f)$ is also called the transalgebraic degree of $f$ 
at $\rho$.

The algebraic part of the divisor is
$$
\Div_0(f) = \sum_{\rho \in X} n_\rho . (\rho )
$$
and the transcendental part of the divisor is
$$
\Div_\infty(f) = \sum_{\rho \in S(f)} d_\rho .(\rho )_\infty
$$
so that 
$$
\Div(f) = \Div_0(f) + \Div_\infty(f) \ .
$$
The support of the transcendental part of the divisor is $\supp(\Div_\infty(f))=S(f)$. The 
support of the algebraic part $\supp(\Div_0(f))$ is the classical support of $\Div_0(f)$.

The support of the transalgebraic divisor of $f$ is the finite subset of $X$
$$
\supp (\Div (f)) = \supp(\Div_0(f)) \cup \supp(\Div_\infty(f)) \ .
$$
\end{definition}

With these notations, we have $f\in \cF(X, S(f))\cap \cF_{|\supp(f)|}$.

We recall that the set of compact subsets of a compact metric space is endowed with a natural Hausdorff distance and 
a Hausdorff topology. Any distance on $X$ defining its compact surface topology defines the same Haussdorf topology 
on compact subsets. We fix one such distance $d_H$ and we define a topology on $\cF(X)$ that is independent of the 
choice of $d_H$.

\begin{definition}
We define the topology on $\cF(X)$ of uniform
convergence out of the support of the divisor  by defining a
sequence $(f_k)$ convergent to $f \in \cF$ if
$$
\supp (\Div (f_k)) \to \supp (\Div (f))
$$
in Hausdorff topology, and $f_k \to f$ uniformly on compact sets out of
${\hbox {\rm supp}} (\Div (f))$.
\end{definition}

We can construct a bases of neighborhoods $(U_\epsilon(f))_{\epsilon >0}$ 
of an element $f\in \cF(X)$  for this 
topology by taking for $\epsilon >0$, the $\epsilon$-Hausdorff neighborhood 
of $\supp(\Div (f))$, $V_\eps(\supp(\Div (f)))$ and defining $U_\epsilon(f)$ 
to be the subset of $g\in \cF(X)$ 
such that $\supp(\Div (g))\in V_\eps(\supp(\Div (f)))$, i.e.
$$
d_H(\supp(\Div (g)), \supp (\Div (f))) <\epsilon
$$
and
$$
||g-f||_{C^0(X-W_\eps(\supp(\Div (f)))}<\eps
$$

\noindent where $W_\eps(\supp(\Div (f)))$ denotes the $\eps$-neighborhood of $\supp(\Div (f))$ in $X$.

\begin{proposition}
 The groups  $\cM(X)^*$, $\cM(X,S)^*$,  $\cT(X)$, $\cT(X,S)$, $\cF(X)$ and $\cF(X,S)$  are topological groups.
\end{proposition}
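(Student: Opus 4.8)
The plan is to verify the two axioms of a topological group for each of the listed groups simultaneously, since they all carry the same topology (restricted from $\cF(X)$) and the same group operation (pointwise multiplication of functions); it therefore suffices to treat $\cF(X)$, noting that all the others are subgroups with the subspace topology, so continuity of multiplication and inversion restricts to them automatically. The two things to check are that the inversion map $f\mapsto f^{-1}$ is continuous and that multiplication $(f,g)\mapsto fg$ is continuous, where continuity is tested against the explicit basis of neighborhoods $U_\epsilon(f)$ described just before the statement.

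First I would handle inversion, which is the easy half. If $f_k\to f$ in $\cF(X)$, then by definition $\supp(\Div(f_k))\to\supp(\Div(f))$ in the Hausdorff topology; but $\supp(\Div(f^{-1}))=\supp(\Div(f))$ for every $f$ (a zero becomes a pole of the same order, a pole becomes a zero, an exponential singularity of order $d$ for $f$ is one of order $d$ for $f^{-1}$ by the remark in Section~2), so the support condition is automatic. For the uniform part: on any compact set $K$ disjoint from $\supp(\Div(f))$, the function $f$ is holomorphic and zero-free, hence $|f|$ is bounded below by some $c>0$ on $K$; once $k$ is large the $f_k$ are uniformly close to $f$ on $K$, hence also bounded below by $c/2$, and then $|f_k^{-1}-f^{-1}|=|f_k-f|/|f_k f|\le (2/c^2)|f_k-f|\to 0$ uniformly on $K$. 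This gives $f_k^{-1}\to f^{-1}$.

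Next, multiplication. Suppose $f_k\to f$ and $g_k\to g$. The key algebraic input is that $\supp(\Div(fg))\subseteq\supp(\Div(f))\cup\supp(\Div(g))$, with equality unless zeros and poles cancel; the subtlety is precisely this possible cancellation, so the support of $\Div(f_kg_k)$ need not converge to $\supp(\Div(fg))$ in a naive way — and this is where I expect the main obstacle to lie. The resolution is that the topology on $\cF(X)$ only demands Hausdorff convergence of the full supports together with local uniform convergence off the limit support, and one must show that near a point $\rho$ where $f$ and $g$ have a common zero–pole that cancels in $fg$, the perturbed product $f_kg_k$ stays uniformly close to $fg$ even though $f_k$ and $g_k$ individually blow up or vanish there. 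Here one uses the Weierstrass-type local normal form: near $\rho$ write $f=z^{n}u$, $g=z^{-n}v$ with $u,v$ holomorphic nonvanishing (Proposition on exponential singularities handles the case where $\rho$ is an exponential singularity of one factor but not a true pole/zero of the product), so $fg=uv$ extends holomorphically; the perturbations $f_k,g_k$ have zeros/poles near $\rho$ but of matching orders, so $f_kg_k$ again has a removable singularity there and, by a Rouché/Hurwitz argument controlling the location of these nearby zeros and poles, $f_kg_k\to fg$ uniformly on a small disk around $\rho$. Away from such cancellation points the support convergence is straightforward from $\supp(\Div(f_kg_k))\to\supp(\Div(f))\cup\supp(\Div(g))$ and the uniform convergence of $f_kg_k=f_k g_k$ to $fg$ follows from boundedness of $f,g$ on compacta off the support together with $f_kg_k-fg=(f_k-f)g_k+f(g_k-g)$. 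Assembling the local pictures over the finitely many points of $\supp(\Div(f))\cup\supp(\Div(g))$ and the global estimate on the complement yields $f_kg_k\to fg$, completing the proof that $\cF(X)$, and hence each listed subgroup, is a topological group.
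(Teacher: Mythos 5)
Your inversion argument is correct (and more detailed than what the paper records), but the multiplication half breaks down at exactly the point you single out as the main obstacle. The false step is the claim that, near a point $\rho$ where a zero of $f$ cancels a pole of $g$, ``the perturbations $f_k,g_k$ have zeros/poles near $\rho$ but of matching orders, so $f_kg_k$ again has a removable singularity there.'' Matching total multiplicities does not make the nearby zeros and poles coincide as points, so no Rouch\'e/Hurwitz argument produces a removable singularity. Concretely, on $\overline{\CC}$ take $f(z)=z(z-1)$, $g(z)=1/z$, and $f_k(z)=(z-\tfrac1k)(z-1)$, $g_k(z)=1/(z-\tfrac2k)$. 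Then $f_k\to f$ and $g_k\to g$ in the stated topology, and $fg=z-1$ has $\supp(\Div(fg))=\{1,\infty\}$; but $f_kg_k$ has a genuine zero at $1/k$ and a genuine pole at $2/k$, so $\supp(\Div(f_kg_k))=\{\tfrac1k,\tfrac2k,1,\infty\}$ Hausdorff-converges to $\{0,1,\infty\}\neq\{1,\infty\}$, and $f_kg_k$ has a pole inside the compact set $\overline{D(0,1/2)}$, which is disjoint from $\{1,\infty\}$, so uniform convergence off $\supp(\Div(fg))$ fails as well. Both clauses of the definition of convergence are violated, so the local normal form $f=z^nu$, $g=z^{-n}v$ does not rescue the argument: the difficulty is not that $f_kg_k$ fails to be close to $fg$ where both are defined, but that $f_kg_k$ acquires a divisor near $\rho$ that $fg$ does not have.

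This also shows why your opening reduction --- ``it suffices to treat $\cF(X)$, since the others carry the subspace topology'' --- loses precisely the structure the paper's (admittedly very terse) proof relies on. The paper argues in the opposite direction: it first treats the groups attached to a fixed finite set $S$, where it invokes the fact that the singular supports are pinned to $S$ (so the supports do not move, Hausdorff convergence of subsets of a finite set forces eventual equality, and multiplicities at points of $S$ are stable by the argument principle), and only then passes to the large groups as unions along the filtration of Definition~\ref{def:punctured}. Your approach concentrates the entire difficulty in the one place where the naive argument fails, and the cancellation phenomenon above is a genuine obstruction there, not a removable technicality; closing the gap requires either restricting to the pinned-support setting or reformulating the convergence so that the support of $\Div(f_kg_k)$ is only required to accumulate inside a prescribed finite set.
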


\begin{proof}
For the groups of fuctions with singularities in $S$, the multiplication and inverse are continuous since the set of zeros, 
poles and singularities are restricted to $S$. The larger groups are unions of those according to 
the filtration from Definition \ref{def:punctured}).
\end{proof}

\begin{proposition}
The subgroup $\cF(X,S)$ and the subspace $\cF_n(X)$ are closed in $\cF(X)$.
\end{proposition}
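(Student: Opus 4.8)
The plan is to prove the two assertions separately, in each case using the explicit description of convergence in $\cF(X)$: $f_k\to f$ precisely when $d_H(\supp(\Div(f_k)),\supp(\Div(f)))\to 0$ and $f_k\to f$ uniformly on $X$ minus every fixed neighbourhood of $\supp(\Div(f))$.

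For the subspace $\cF_n(X)$, the key ingredient is the elementary point-set fact that the family of compact subsets of $X$ of cardinality at most $n$ is closed for the Hausdorff topology: if $K_j\to K$ with $|K_j|\le n$ for all $j$ and $K$ had $n+1$ distinct points pairwise at distance more than $3\delta$, then for $j$ large $d_H(K_j,K)<\delta$ would place points of $K_j$ in $n+1$ disjoint balls, contradicting $|K_j|\le n$. Granting this, let $f_k\in\cF_n(X)$ with $f_k\to f$ in $\cF(X)$. By hypothesis $f\in\cF(X)$, hence $\supp(\Div(f))$ is finite; and $\supp(\Div(f_k))\to\supp(\Div(f))$ with $|\supp(\Div(f_k))|\le n$ forces $|\supp(\Div(f))|\le n$. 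Since $|\supp(\Div(f))|$ is exactly the number of zeros, poles and exponential singularities of $f$, each counted once, this gives $f\in\cF_n(X)$.

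For $\cF(X,S)$, take $f_k\in\cF(X,S)$ with $f_k\to f$ in $\cF(X)$; as $f\in\cF(X)$ has only finitely many exponential singularities, it suffices to show $f$ has none outside $S$, i.e.\ that $f$ is meromorphic on $X-S$. Fix $z_0\in X-S$ and a coordinate disc $D$ centred at $z_0$ whose closure meets neither $S$ nor $\supp(\Div(f))\setminus\{z_0\}$. For $k$ large, $\supp(\Div(f_k))\cap\overline D$ lies in an arbitrarily small subdisc $D_k\ni z_0$, and since $\overline D\cap S=\emptyset$ and $f_k\in\cF(X,S)$ these points are zeros or poles of $f_k$, not exponential singularities; hence $f_k$ is meromorphic on $D$ with divisor supported in $D_k$. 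On each closed sub-annulus of $D\setminus\{z_0\}$ the $f_k$ are holomorphic and zero-free and converge uniformly to $f$, so by Hurwitz (and $f\not\equiv 0$) the limit $f$ is holomorphic and zero-free on $D\setminus\{z_0\}$, with a well-defined integer local degree $m=\frac{1}{2\pi i}\oint_{\partial D}d\log f=\lim_k\frac{1}{2\pi i}\oint_{\partial D}d\log f_k$ (a limit of integers, each $f_k$ being meromorphic inside $D$). It then remains to deduce from the convergence that $f(z-z_0)^{-m}$ extends holomorphically and zero-free across $z_0$ rather than acquiring an essential singularity; granting this, $f$ is meromorphic at $z_0$, and as $z_0$ was arbitrary in $X-S$ we get $f\in\cM(X-S)^*$ with exponential singularities only in $S$, i.e.\ $f\in\cF(X,S)$.

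The step I expect to be the real obstacle is precisely this last deduction. Uniform convergence on annuli by itself does not prevent the local multiplicities of $\Div(f_k)$ near $z_0\notin S$ from growing without bound — the Eulerian degeneration $(1+\tfrac1{k(z-z_0)})^k\to e^{1/(z-z_0)}$ is exactly of this form and produces a genuine exponential singularity in the limit — so one must use the convergence $f_k\to f$ in $\cF(X)$ more fully: the interplay of the Hausdorff convergence of the supports with the uniform convergence off those supports should force the local degree $m$ and the (bounded number of, bounded order) zeros and poles of the $f_k$ near $z_0$ to be controlled tightly enough that the limit stays meromorphic there. The remaining ingredients — Hurwitz's theorem, integrality of the local degree, and the point-set lemma for $\cF_n$ — are routine.
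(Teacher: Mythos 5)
Your treatment of $\cF_n(X)$ is correct and is essentially the paper's own argument: upper semicontinuity of cardinality of finite sets under Hausdorff convergence gives $|\supp(\Div(f))|\le n$, and since the limit is assumed to lie in $\cF(X)$ there is nothing more to check (the paper adds a Hurwitz remark to exclude zeros, poles or singularities of $f$ off $\supp(\Div(f))$, which is the same routine point your argument implicitly covers).

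The obstacle you flag in the $\cF(X,S)$ half is real, and you should not try to close it: the Hausdorff convergence of supports combined with locally uniform convergence off the limit support does \emph{not} control the local multiplicities near a point $z_0\notin S$, and the example you mention is already a counterexample to the statement as written. Take $X=\overline{\CC}$, $z_0\in\CC\setminus S$, and $g_k(z)=\bigl(1+\tfrac{1}{k(z-z_0)}\bigr)^{k}$. Each $g_k$ is rational, hence lies in $\cF(\overline{\CC},S)$ for every finite $S$; its divisor is supported on $\{z_0-1/k,\,z_0\}$, which converges in Hausdorff distance to $\{z_0\}=\supp(\Div(e^{1/(z-z_0)}))$; and $g_k\to e^{1/(z-z_0)}$ uniformly on compact subsets of $\overline{\CC}\setminus\{z_0\}$. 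So $g_k\to e^{1/(z-z_0)}$ in $\cF(\overline{\CC})$, yet the limit has an exponential singularity at $z_0\notin S$ and therefore does not belong to $\cF(\overline{\CC},S)$. This is exactly the degeneration the paper itself uses immediately after the proposition to show that $\cM_n(X)^*$ is not closed, and since $\cM(X)^*=\cF(X,\emptyset)$ the first assertion of the proposition fails already for $S=\emptyset$. What does survive is the cardinality bound (closedness of $\cF_n(X)$), and the condition $\supp(\Div(f))\subseteq S$ would also be closed, but confinement of the \emph{exponential singularities} to a fixed finite $S$ while allowing zeros and poles elsewhere is not a closed condition. For comparison, the paper's proof never engages with the location of the singularities: it proves the cardinality statement and then asserts that ``$\cF_n(X,S)$ is closed with the same proof,'' which is precisely the step your analysis correctly identifies as the one that cannot work.
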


\begin{proof}
We prove first that $\cF_n(X)$ is closed in $\cF(X)$.
 Consider a sequence $(f_k)\subset \cF_n(X)$ such that $f_k\to f \in \cF(X)$. The cardinal of finite sets 
 is upper semi continuous for the Hausdorff topology, hence $|\supp(\Div (f))| \leq n$. Moreover,
by Hurwitz Theorem the limit function $f$ has no singularities and 
cannot have zeros nor poles in $X-\supp (\Div(f))$ (note that $f$ cannot be the constant 
function $0$ or $\infty$ since $0, \infty \notin \cF$). Hence $f\in \cF_n(X)$. Now the 
subspace $\cF_n(X, S)$ is closed in $\cF(X)$ with the same proof.
\end{proof}

The space $\cM_n(X)^*$ is not closed as we can see using the Euler example where $X=\overline{\CC}$ with 
$$
f_k(z)=\left (1+\frac{z}{k}\right )^k \in \cM_2(\overline{\CC})^*
$$
but $f_k(z)\to e^z \notin \CC(z)= \cM(\overline{\CC})$.

We can define a transalgebraic degree:

\begin{definition}
Let $f\in \cT(X)$. 
The total transalgebraic degree of $f$ is 
$$
d_\infty(f) =\sum_{\rho \in S(f)} (d_\rho (f)+1)
$$
We also define $d_0(f) = |\supp(\Div_0(f))-S(f)|$.
We define the space $\cT^{d_0, d_\infty}(X) \subset \cT (X)$ as the subspace of those $f\in \cT(X)$ with
\begin{align*}
d_\infty(f) &= d_\infty \\
d_0(f) &= d_0 \ .
\end{align*}
For a finite set $S\subset X$ we define 
$$
\cT^{d_0, d_\infty}(X, S)=\cT^{d_0, d_\infty}(X) \cap \cT(X, S)
$$
\end{definition}

\noindent Observe that for $d_\infty =0 $,
$$
\cT^{d_0, 0}(X) =  \cM_{d_0} (X)
$$
and for $d_\infty  \geq 1 $,
$$
\cT^{d_0, d_\infty}(X) \subset  \cT_{d_0+d_\infty-1} (X)
$$
since for any $f \in \cT^{d_0, d_\infty}(X)$ we have $|\supp(\Div(f))| \leq d_0(f)+d_\infty(f) -1$.
Also we have  $|S(f)|\leq d_\infty(f)$.

The main Theorem in this section is that the closure of $\cM_n(X)$ is in the transalgebraic class. More precisely,

\begin{theorem}\label{thm:main}
The closure of $\cM_n(X)$ in  $\cF(X)$ is 
$$
\overline{\cM_n(X)}\subset \bigcup_{\substack{d_0, d_\infty \geq 0 \\ d_0+d_ \infty\leq n}} \cT^{d_0, d_\infty}(X)
\subset \cM_n(X)\cup\cT_{n-1}(X)\subset \cT_{n}(X) \subset \cT(X)
$$ 

For $n\leq |S|$, the closure of $\cM_n(X,S)$ is 
$$
\overline{\cM_n(X, S)}\subset \bigcup_{\substack{d_0, d_\infty \geq 0 \\ d_0+d_ \infty\leq n}} \cT^{d_0, d_\infty}(X,S)
\subset \cM_n(X,S)\cup\cT_{n-1}(X,S)\subset \cT_{n}(X,S)\subset \cT(X,S)
$$ 
\end{theorem}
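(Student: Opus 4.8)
The plan is to prove the statement by a normal-families / compactness argument, reducing everything to the local description of exponential singularities from Proposition 2.3. First I would take a sequence $(f_k)\subset \cM_n(X)$ with $f_k\to f$ in $\cF(X)$; by the preceding proposition $\cF_n(X)$ is closed, so $f\in \cF_n(X)$, meaning $f$ has at most $n$ points in $\supp(\Div(f))$, say $\rho_1,\dots,\rho_m$ with $m\le n$, and $f_k\to f$ uniformly on compacts of $X-\{\rho_1,\dots,\rho_m\}$ while $\supp(\Div(f_k))\to\{\rho_1,\dots,\rho_m\}$ in the Hausdorff topology. The goal is to show each $\rho_j$ is either an ordinary zero/pole of $f$ or a \emph{finite-order} exponential singularity, and that the count $d_0(f)+d_\infty(f)\le n$.

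The key local step: fix a coordinate chart around some $\rho=\rho_j$ with $\rho=0$, a small disk $D=D(0,r)$ and an annulus $A=\{r/2<|z|<r\}\subset D-\{0\}$ on which the convergence $f_k\to f$ is uniform, with $f$ zero- and pole-free on $A$ (possible since the accumulating zeros/poles of $f_k$ land at $0$). Using the logarithmic derivative, I would control the residue $n_\rho=\frac{1}{2\pi i}\int_{|z|=3r/4} f_k'/f_k\,dz$, which is an integer that converges, hence stabilizes to the residue for $f$. The crux is the finite-order bound: since $f_k$ is meromorphic with all its zeros/poles inside $D$ and their number (without multiplicity) is bounded by $n$, the multiplicity could a priori blow up, but I would argue that a uniform bound on $\log|f_k|$ over $A$ (from uniform convergence to the nonvanishing $f$) forces, via Jensen's formula on shrinking circles, a bound of the form $\limsup_{z\to 0}|z|^d\log|f(z)|<\infty$ with $d$ equal to $1$ plus a quantity counting how fast the point-masses of $\Div(f_k)$ near $0$ can grow — and the total mass is controlled by the number of distinct points times the pole order of $d\log f_k$, which one must bound. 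A cleaner route: write $f=z^{n_\rho}e^{h}$ on $D-\{0\}$ by Proposition 2.3 (valid because $f\in\cF(X)$ so $0$ is at worst an exponential singularity), and show $h$ is meromorphic at $0$ by bounding its Laurent coefficients; the uniform-on-$A$ bound on $\log|f_k|=\Re(n_\rho\log z+h_k)$ plus the fact that $h_k$ is holomorphic on $D$ (the $f_k$ are meromorphic!) gives, by the maximum principle applied to $h-h_k$ on $A$ and a Phragmén–Lindelöf / three-circles comparison, that the principal part of $h$ at $0$ is a polynomial in $1/z$ of bounded degree. The degree bound is where $d_0+d_\infty\le n$ enters: each distinct zero/pole of the $f_k$ that coalesces at $\rho$ contributes at most one unit either to the algebraic multiplicity budget or to the order $d_\rho$ of the resulting exponential pole, so summing over all $\rho_j$ gives $\sum_j(\text{contribution})\le n$, i.e. $d_0(f)+d_\infty(f)\le n$.

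Granting the local finiteness, the inclusions in the displayed formula are then formal: $f\in\bigcup_{d_0+d_\infty\le n}\cT^{d_0,d_\infty}(X)$ by definition of these spaces; the inclusion into $\cM_n(X)\cup\cT_{n-1}(X)$ follows since if $d_\infty=0$ then $\cT^{d_0,0}(X)=\cM_{d_0}(X)\subset\cM_n(X)$, while if $d_\infty\ge1$ the remark after the last definition gives $\cT^{d_0,d_\infty}(X)\subset\cT_{d_0+d_\infty-1}(X)\subset\cT_{n-1}(X)$; and $\cM_n(X)\cup\cT_{n-1}(X)\subset\cT_n(X)\subset\cT(X)$ is immediate from $\cM_n(X)\subset\cT_n(X)$ and monotonicity of the filtration. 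For the punctured version, when $n\le|S|$ the same argument applies verbatim, with the extra input that all singularities of each $f_k$ (there are none, as $f_k$ is meromorphic on $X$) and all limit singularities are constrained to lie in $S$ by the hypothesis $(f_k)\subset\cM_n(X,S)\subset\cF(X,S)$ together with the fact that $\cF(X,S)$ is closed (previous proposition); thus the accumulation points $\rho_j$ all lie in $S$, and $f\in\cT(X,S)$.

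The main obstacle I anticipate is the finite-order claim for the limit exponential singularity, i.e.\ ruling out that $f$ acquires an \emph{infinite-order} essential singularity at some $\rho_j$ in the limit. This is exactly the phenomenon that the bound ``$n$ distinct points'' is designed to prevent, and making it rigorous requires a quantitative potential-theoretic estimate: controlling $\log|f|$ near $\rho$ by $\log|f_k|$ (uniform on the annulus $A$) plus the Riesz mass of $\log|f_k|$ inside $D$, and showing that although this mass (the total multiplicity) may diverge, it does so at a rate that keeps $|z|^{d}\log|f(z)|$ bounded for $d$ of size at most the number of coalescing distinct points. I would handle this via Jensen's formula relating $\int_0^{r}\frac{n_k(t)}{t}dt$ (counting function of zeros+poles of $f_k$ in $D(0,t)$) to boundary integrals of $\log|f_k|$, extract from the Hausdorff convergence that $n_k(t)=0$ for $t$ below some fixed threshold once we look away from the finitely many limit points, and then pass to the limit carefully — the subtlety being uniformity of the threshold, which follows from $\supp(\Div(f_k))\to\supp(\Div(f))$ in Hausdorff distance.
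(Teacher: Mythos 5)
Your overall skeleton --- closedness of $\cF_n(X)$, local analysis at each coalescence point, integrality of the residue of $d\log f$, and a count of distinct coalescing zeros and poles to bound $d_0(f)+d_\infty(f)$ by $n$ --- matches the paper's, and your treatment of the formal inclusions and of the punctured case is fine. The genuine gap sits exactly at the step you yourself flag as ``the main obstacle'': proving that the limit singularity is an exponential singularity of \emph{finite} order controlled by the number of coalescing points. Your first route (Jensen's formula applied to $\log|f_k|$) is the wrong tool, because the quantity it measures --- the total multiplicity, i.e.\ the Riesz mass of $\log|f_k|$ in the disk --- is genuinely unbounded along the sequence (that is precisely how an exponential singularity is created), so a uniform bound on $\log|f_k|$ over the annulus $A$ does not by itself yield $\limsup |z|^d\log|f(z)|<\infty$ for any finite $d$; one would have to quantify the cancellation between diverging positive and negative masses, which your sketch does not do. Your ``cleaner route'' contains an actual error: you cannot write $f_k=z^{n_\rho}e^{h_k}$ with $h_k$ holomorphic on $D$, since $f_k$ has zeros and poles \emph{inside} $D$ and $\log f_k$ therefore has monodromy around each of them; the parenthetical ``(the $f_k$ are meromorphic!)'' points the wrong way.

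The paper closes this gap by working with the logarithmic differential instead of with $\log|f_k|$. By Lemma \ref{lemma1}, $\omega_k=d\log f_k$ is a meromorphic $1$-form on $X$ with at most $n$ \emph{simple} poles (one for each distinct zero or pole of $f_k$, with integer residue equal to the multiplicity), and $\omega_k\to d\log f$ uniformly on compacts of $X-\supp(\Div(f))$. If $z_{1,k},\dots,z_{m,k}$ are the poles of $\omega_k$ converging to $\rho$ (so $m\le n$), then in a local chart the product $\prod_{i=1}^m(z-z_{i,k})\cdot f_k'(z)/f_k(z)$ is holomorphic on a fixed disk $D$ around $\rho$ and converges uniformly on $\partial D$; by the maximum principle it converges uniformly on $D$ to a holomorphic function, so $f'/f$ is meromorphic at $\rho$ with a pole of order at most $m$, and its residue is a limit of sums of integers, hence an integer. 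Lemma \ref{lemma2} then converts this into $f\in\cT(X)$, and since the order of the pole of $d\log f$ at $\rho$ equals $d_\rho(f)+1$ when $\rho\in S(f)$ and $1$ when $\rho$ is an ordinary zero or pole, summing the counts $m_\rho$ over the coalescence points gives $d_0(f)+d_\infty(f)\le n$ --- the sharper accounting your last sentence only gestures at. No potential-theoretic estimate is needed.
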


The following two Lemmas are clear from the local analysis.

\begin{lemma}\label{lemma1}
Let $f\in \cM(X)$ be a meromorphic function, $f:X\to \overline{\CC}$. The poles and
zeros of $f$ correspond bijectively to simple poles of the
logarithmic derivative form $f'/f\, dz=d \log f$. The residue at these simple poles is the 
positive, resp. negative, multiplicity of the zero, resp.  pole, of $f$. 
\end{lemma}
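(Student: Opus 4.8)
The plan is to reduce everything to a purely local computation in a chart around each point of $X$, since both the property of $d\log f = f'/f\,dz$ having a simple pole and the multiplicity of a zero or pole of $f$ are local and chart-independent. First I would note that at any point where $f$ is holomorphic and nonvanishing, the quotient $f'/f$ is holomorphic and hence the form $d\log f$ has no pole there. Consequently the poles of $d\log f$ can only be located at the zeros and poles of $f$, which gives one inclusion of the claimed correspondence.

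Next I would treat a zero or a pole directly. Fix such a point $z_0$ and choose a local chart with $z_0 = 0$. By the definition of multiplicity there is an integer $m \in \ZZ^*$, positive at a zero and negative at a pole, together with a holomorphic function $u$ satisfying $u(0)\neq 0$, such that $f(z) = z^m u(z)$ on a neighborhood of $0$. Logarithmic differentiation then yields
$$\frac{f'(z)}{f(z)} = \frac{m}{z} + \frac{u'(z)}{u(z)} \ .$$
Since $u(0)\neq 0$, the term $u'/u$ is holomorphic at $0$, so $f'/f$ has a pole at $z_0$ whose polar part is exactly $m/z$; that is, a simple pole with residue precisely $m$.

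Combining the two steps produces the bijection asserted in the statement: every zero or pole of $f$ yields a simple pole of $d\log f$, every pole of $d\log f$ lies at a zero or pole of $f$, and the residue recovers the signed multiplicity $m$, positive for a zero and negative for a pole. I do not expect any genuine obstacle here; the one point that deserves a word of care is that the residue $m$ is always a \emph{nonzero} integer, which is exactly what guarantees that the simple pole of $d\log f$ is truly present and therefore that the correspondence is bijective rather than merely one-directional.
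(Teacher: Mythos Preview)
Your argument is correct and is precisely the local analysis the paper has in mind; the paper does not give a detailed proof but simply remarks that the lemma is clear from the local analysis, and your computation $f=z^m u$, $f'/f=m/z+u'/u$ is exactly that. Nothing more is needed.
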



\begin{lemma}\label{lemma2}
Let $f\in \cT(X)$. The logarithmic derivative  $d \log f$ is a meromorphic differential $d \log f\in \Omega^1(X)$ with integer residues.
Conversely, if $f\in \cF(X)$ is such that $d \log f\in \cM\Omega^1(X)$ has integer residues at poles, then $f\in \cT(X)$. 
\end{lemma}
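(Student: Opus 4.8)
\emph{Proof proposal.} The plan is to reduce both implications to the local normal form established in the earlier Proposition (that an exponential singularity of $f$ at $\rho$ is exactly a local expression $f(z)=z^n e^{h(z)}$, with the order finite iff $h$ is meromorphic, iff $d\log f$ has a finite-order pole) together with the elementary fact that an exact meromorphic derivative has vanishing residue. Everything is local at the finitely many special points of $f$, so I would fix such a point $\rho$, choose a chart with $\rho=0$, and separate the three possible local behaviors of a function in $\cF(X)$: a point where $f$ is holomorphic and non-vanishing, a zero or pole, and an exponential singularity.

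For the forward direction, let $f\in\cT(X)$. By the finiteness Proposition, $f$ has only finitely many zeros, poles and exponential singularities, and all of the latter are of finite order. Away from these points $d\log f=(f'/f)\,dz$ is holomorphic. At a zero or pole of multiplicity $m$, Lemma~\ref{lemma1} gives a simple pole of $d\log f$ with residue $m\in\ZZ$. At a finite-order exponential singularity the normal form $f(z)=z^n e^{h(z)}$ with $n\in\ZZ$ and $h$ meromorphic yields $d\log f=(n/z+h'(z))\,dz$, which has a pole of finite order $d+1$; its residue equals $n$, since the remaining term $h'\,dz=dh$ is an exact meromorphic form and hence has residue zero (the coefficient of $z^{-1}$ in $h'=\sum_k kc_kz^{k-1}$ comes from $k=0$ and so vanishes). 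Assembling these pictures over the finite support of $\Div(f)$ shows that $d\log f$ is a meromorphic $1$-form on $X$ with integer residues.

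For the converse, let $f\in\cF(X)$ be such that $d\log f$ is a meromorphic $1$-form with integer residues; I must show that every exponential singularity of $f$ has finite order. Since $f\in\cF(X)$, it has finitely many exponential singularities, and at each such $\rho$ the Proposition gives $f(z)=z^n e^{h(z)}$ with $h$ holomorphic on $U-\{0\}$. The hypothesis that $d\log f$ is meromorphic means its pole at $\rho$ has finite order, and by the equivalence in that same Proposition this forces $h$ to have finite polar part, i.e. the singularity is of finite order. Hence all exponential singularities of $f$ are of finite order and $f\in\cT(X)$. I would add the remark that the integer-residue hypothesis is in fact automatic: single-valuedness of $f\in\cF(X)$ forces $\oint d\log f\in 2\pi i\,\ZZ$ around each singularity, so the genuine content of the converse is the passage from meromorphy of $d\log f$ to finiteness of the order.

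The only step requiring any care — and the crux of the forward direction — is the residue computation at an exponential singularity: one must isolate the $n/z$ term, whose residue is the integer $n$ coming from the absence of monodromy of $f$, from the $h'$ term, whose residue vanishes by exactness. Everything else is routine bookkeeping over the finite support of the divisor, consistent with the remark that the statement is clear from the local analysis.
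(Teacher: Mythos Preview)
Your argument is correct and is exactly the ``local analysis'' the paper alludes to: the paper does not spell out a proof of this lemma but simply declares it clear, and your reduction to the normal form $f(z)=z^n e^{h(z)}$ of Proposition~2.2, together with the vanishing of the residue of the exact piece $h'\,dz$, is precisely what is intended. Your added remark that the integrality of residues is automatic for any $f\in\cF(X)$ by single-valuedness is a worthwhile clarification that the paper leaves implicit.
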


The following Lemma shows that the only way to have a creation of an exponential singularity $\rho$ of 
transalgebraic degree $d_\rho(f)$ at the limit 
for a converging sequence $f_k\to f$ of rational functions is to have $d_{\rho}(f)+1$ distinct 
sequences of poles and zeros of the $f_k$ converging to $\rho$ (there must be both, poles and zeros).

\begin{lemma}
Let  $(f_k)\subset  \cM_n(X)$ converging to $f\in \cF(X)$ and let $\rho \in S(f)\subset X$. Then there 
exists at least $d_{\rho}(f)+1$ distinct sequences of poles and zeros of the $(f_k)$ converging to $\rho$. 
\end{lemma}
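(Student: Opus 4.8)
The plan is to analyze the convergence locally near $\rho$ using the logarithmic derivative, which by Lemma \ref{lemma2} turns the problem into a statement about meromorphic differentials. First I would pass to a local chart with $\rho=0$, fix a small closed disk $\bar D_r$ around $0$ on which the limit $f$ has $0$ as its only point of $\supp(\Div(f))$, so that $f = z^m e^{h(z)}$ by the earlier Proposition with $h$ meromorphic having a pole of order exactly $d=d_\rho(f)$ at $0$. Since $\supp(\Div(f_k))\to\supp(\Div(f))$ in Hausdorff distance and $f_k\to f$ uniformly on compact subsets of $\bar D_r\setminus\{0\}$, for $k$ large all zeros and poles of $f_k$ inside $\bar D_r$ lie in a shrinking neighborhood of $0$, and $f_k$ has no zeros or poles on the circle $|z|=r$. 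The key quantity is the winding number (argument principle): the number of zeros minus poles of $f_k$ in $D_r$, counted with multiplicity, equals $\frac{1}{2\pi i}\oint_{|z|=r} d\log f_k \to \frac{1}{2\pi i}\oint_{|z|=r} d\log f = m$, so this algebraic count stabilizes and gives no information directly; the real information must come from how the Laurent coefficients of $d\log f_k$ behave.

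The heart of the argument: $d\log f_k$ is a rational differential whose only poles in $D_r$ are simple, located at the zeros and poles of $f_k$, with integer residues (Lemma \ref{lemma1}). Write $g_k = f_k'/f_k$; on the annulus near $|z|=r$ we have $g_k \to h' + m/z$ uniformly, hence each Laurent coefficient $a_j^{(k)} := \frac{1}{2\pi i}\oint_{|z|=r} z^{-j-1} g_k(z)\,dz$ converges to the corresponding Laurent coefficient of $h'+m/z$ as $k\to\infty$. Since $h$ has a pole of order exactly $d$ at $0$, $h'$ has a pole of order $d+1$, so the coefficients $a_{-d-1}^{(k)},\dots,a_{-2}^{(k)}$ converge to the (not all zero, in particular $a_{-d-1}$ nonzero) coefficients of $h'$. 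Now suppose for contradiction that only $N \le d$ distinct sequences of zeros/poles of $f_k$ converge to $0$; say near $0$ the function $f_k$ has its zeros and poles at points $\beta_1^{(k)},\dots,\beta_{N}^{(k)}$ (with multiplicities $\mu_1^{(k)},\dots,\mu_N^{(k)}\in\ZZ$, $\mu_i^{(k)}$ the residue), all tending to $0$. Then on the annulus $g_k(z) = \sum_{i=1}^N \frac{\mu_i^{(k)}}{z-\beta_i^{(k)}} + (\text{holomorphic on } D_r)$, so for $j\le -1$ the Laurent coefficient is $a_j^{(k)} = \sum_{i=1}^N \mu_i^{(k)} (\beta_i^{(k)})^{-j-1}$ (using $\frac{1}{z-\beta} = \sum_{\ell\ge 0}\beta^\ell z^{-\ell-1}$ for $|z|>|\beta|$). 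Thus the vector $(a_{-1}^{(k)}, a_{-2}^{(k)},\dots)$ is, term by term, a power-sum-type combination of only $N$ source points. The plan is to extract a contradiction with the requirement that $a_{-d-1}^{(k)}\to (\text{nonzero})$ while $a_{-d-2}^{(k)}$ and beyond converge to $0$: this forces a bound relating the growth of the $\mu_i^{(k)}$ and the $\beta_i^{(k)}$; a clean way is a Vandermonde / generalized Newton's identities argument — the generating function $\sum_{j\ge 1} a_{-j}^{(k)} w^{j-1} = \sum_i \mu_i^{(k)}\frac{1}{1-\beta_i^{(k)} w}$ is a rational function of $w$ with at most $N$ poles, so it converges (coefficientwise as $k\to\infty$) to a rational function with at most $N$ poles; but the limit generating function is $\sum_{j\ge 1}(\text{coeff}_{-j} \text{ of } h'+m/z) w^{j-1}$, and since $h'$ has a pole of order $d+1 > N$, this limiting series does not represent a rational function of degree $\le N$ unless its coefficients eventually vanish — but the argument principle/Hausdorff convergence already pins down that finitely many low-order coefficients are the prescribed ones of $h'$, and one pushes the rank of the associated Hankel matrix: a rational function with $N$ poles has all $(N+1)\times(N+1)$ Hankel minors of its coefficient sequence equal to zero, a closed condition passing to the limit, forcing the Hankel matrix of $h'$'s principal part to have rank $\le N$, contradicting that it has rank exactly $d > N$ (the principal part of $h'$ of order $d+1$ gives a rank-$d$ Hankel block).

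A cleaner packaging that I would actually write: reduce everything to the statement ``a meromorphic germ with a pole of order $d+1$ and integer residue whose residue-one principal part arises as a limit of sums of simple poles with integer residues needs at least $d+1$ source points,'' and prove it via the Hankel-rank (equivalently, Padé / continued-fraction degree) being a lower-semicontinuous quantity that equals $d$ for the principal part of $h'$ but is $\le N$ for each $f_k$ near $0$. One must also separately check that \emph{both} poles and zeros occur among the $d_\rho(f)+1$ sequences — this follows because if all nearby $\mu_i^{(k)}$ had the same sign, then $|f_k|$ would be monotone in $\log|z|$ near $0$ in a way incompatible with $f = z^m e^h$ having an essential singularity where $\log|f|$ oscillates (Proposition: $f$ takes every value in $\CC^*$ infinitely often near $\rho$), or more simply: if only zeros (or only poles) accumulated at $0$, then $1/f_k$ or $f_k$ would be bounded on $D_r$ after removing finitely many points, contradicting that its limit $z^m e^{h}$ genuinely fails to extend meromorphically; I would state this as a short separate paragraph.

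The main obstacle I anticipate is making the Hankel-rank lower-semicontinuity argument fully rigorous in the face of the multiplicities $\mu_i^{(k)}$ being unbounded integers and the $\beta_i^{(k)}$ possibly colliding among themselves as $k\to\infty$ (so ``$N$ distinct sequences'' is a hypothesis about the limit configuration, not about each finite stage) — one has to be careful that the rational generating function of $f_k$ near $0$ has degree $\le N$ as a \emph{function}, even though the number of poles of $f_k$ in $D_r$ (with multiplicity, or even without) might a priori be larger at finite $k$; the resolution is that ``$N$ distinct sequences converging to $\rho$'' means for large $k$ the zeros/poles in a fixed small disk cluster into exactly $N$ groups, but each group may itself contain several nearby points — so the correct invariant to track is the number of distinct points, and the generating function then has $\le (\text{number of distinct nearby points of } f_k)$ poles, and I need the number of distinct points, not clusters, to be $\ge d+1$. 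So the statement to prove is really about distinct zeros/poles of each $f_k$, and the Hankel argument directly bounds that number below by $d+1 = d_\rho(f)+1$, which is exactly the claim; I would make sure the final sentence of the proof reconciles ``distinct zeros/poles of $f_k$ near $\rho$'' with ``distinct sequences converging to $\rho$'' by a diagonal/subsequence extraction.
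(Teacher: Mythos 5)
Your reduction to the logarithmic derivative is exactly the paper's starting point, but from there the two arguments diverge. The paper disposes of the count in one line: on a small circle about $\rho$ one has $g_k=f_k'/f_k\to g=f'/f$ uniformly, the argument principle gives $N(g_k)-P(g_k)\to N(g)-P(g)$ (numbers of zeros and poles of $g_k$, $g$ in the disk, with multiplicity), Hurwitz gives $N(g_k)\ge N(g)$ for large $k$, hence $P(g_k)\ge P(g)=d_\rho(f)+1$; since every pole of $g_k$ is simple and sits at a zero or pole of $f_k$, that is $d_\rho(f)+1$ distinct points. Your moment/Hankel route reaches the same count by a genuinely different mechanism --- Kronecker's theorem plus lower semicontinuity of Hankel rank under coefficientwise convergence --- and it has the virtue of making explicit why the unbounded multiplicities $\mu_i^{(k)}$ cause no trouble (vanishing of the $(N+1)\times(N+1)$ minors is a closed polynomial condition on the moments, independent of the size of the $\mu_i^{(k)}$), a point the one-line Rouch\'e argument silently absorbs. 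Both proofs still need the final diagonal extraction over shrinking radii that you describe.

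One slip to fix: the Hankel rank of the limiting moment sequence is $d_\rho(f)+1$, not $d_\rho(f)$. Writing $d=d_\rho(f)$ and $s_j=\frac{1}{2\pi i}\oint_{|z|=r}z^{j}g(z)\,dz$, the limit sequence is $(s_0,\dots,s_d,0,0,\dots)$ with $s_d\neq 0$ (it is the leading coefficient of the order-$(d+1)$ principal part of $h'+n_\rho/z$), and the $(d+1)\times(d+1)$ matrix $(s_{i+j})_{0\le i,j\le d}$ is anti-triangular with constant anti-diagonal $s_d$, hence nonsingular. With your stated rank $d$ the case $N=d$ is not excluded and you would only obtain $d$ sequences, one short of the claim; with the correct rank $d+1$ the hypothesis $N\le d$ contradicts rank $\le N$, as required. (Your aside on why both zeros and poles must occur also follows cleanly from the moments: if all residues near $\rho$ had one sign, their sum would converge to the bounded limit $n_\rho$, so the $\mu_i^{(k)}$ would be bounded and $s_d^{(k)}=\sum_i\mu_i^{(k)}\bigl(\beta_i^{(k)}\bigr)^{d}\to 0$, contradicting $s_d\neq 0$.)
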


\begin{proof}
Consider a local chart at $\rho$ and the logarithmic derivative $f_k'/f_k$ in this chart that has simple poles 
corresponding to zeros and poles of $f_k$. We have $f_k'/f_k\to f'/f$ and at the limit we have 
at $\rho$ a pole of order $d_{\rho}(f)+1$ for $f'/f$. The result follows from Rouch\'e's Theorem. Note that 
poles and zeros can also annihilate each other is the have the same multiplicity.
\end{proof}

\begin{proof}[Proof of Theorem \ref{thm:main}]
We consider a sequence of meromorphic functions $(f_k)\subset  \cM_n(X)$ converging to $f\in \cF(X)$. The 
associated sequence of meromorphic logarithmic derivatives $d \log f_k$ have 
the support of their divisor Hausdorff converging to $\supp (\Div(f))$, and uniformly on compact sets outside $X-\supp(f)$ we have 
$d \log f_k \to d \log f$. If $m$ poles of the $d \log f_k$ converge to a pole of $d \log f$ then the order of the pole is less 
or equal to $m-1$ and the residue is an integer as the sum of integer residues of $d \log f_k$ of the converging points.
Hence using the second Lemma \ref{lemma2} and counting poles 
we have that $f\in \cT^{d_0,d_\infty}(X)$.
We have the same  proof for the closure of $\cM_n(X,S)$.
\end{proof}

Observe that from the proof we have that if the residues of the poles of the 
$d \log f_k$ (or the order of the zeros or poles of $f_k$) that collapse into a pole of $d \log f$ are bounded, 
then the pole of $d \log f$ must be simple. Hence, the only way to have higher 
order poles for $d \log f$ is when poles and zeros of $f_k$ collapse into a 
point, the orders are unbounded, but their sum is asymptotically constant. 

We have a converse, and any $f\in \cT(x)$ can be approximated by a sequence $(f_k)\subset \cM(X)$, and each 
exponential singularity can be realized as a limit of poles and zeros, but these more precise results will 
be studied elsewhere, since we are interested in this article on the simpler case $X=\overline{\CC}$. We 
specialize the above results to $X=\overline{\CC}$.

\medskip

Observe that  $\cM_n=\cM_n(\overline {\CC})$ (we drop the dependence on $X$ since $X=\overline{\CC}$ from 
now on) is the group  of non-zero rational functions
$R: {\overline {\CC}} \to {\overline {\CC}}$
with support of cardinal bounded by $n$, i.e. such that
$$
|{\hbox {\rm supp}} (R)|=|R^{-1}(0) \cup R^{-1}(\infty )| \leq n \ .
$$
We abuse the notation by writting $R^{-1}(0)$, resp. $R^{-1}(\infty)$ to 
denote the set of zeros, resp. poles, of $R$, including the possible one at $\infty \in \overline {\CC}$.
Note that we do not count multiplicities. 
So we have
$$
\cM_n =\CC (z) \cap \cF_n \subset \cF_n\ .
$$

\begin{theorem}\label{thm:transalgebraic_Riemann_sphere}
The group of transalgebraic functions $\cT_n$ are the functions 
$f\in \cF_n$ of the form
$$
f=R_0e^{R_1}
$$
where $R_0\not=0$ and $R_1$ are meromorphic functions with $R_1=0$ or

\begin{equation}\label{inequality}
|R_0^{-1}(0) \cup R_0^{-1}(\infty )| +\deg R_1 \leq n \ . 
\end{equation}
In particular, the group of transalgebraic functions $\cT$ is
$$
\cT =\{ f=R_0e^{R_1}; R_0, R_1 \in \CC(z); R_0\not=0 \}
$$
\end{theorem}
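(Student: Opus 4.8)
The plan is to characterize $\cT_n = \cT_n(\overline{\CC})$ explicitly by combining the local structure result (Proposition on exponential singularities) with the global rigidity of meromorphic differentials on $\overline{\CC}$. First I would take $f \in \cT_n$ and set $\omega = d\log f = f'/f\, dz \in \cM\Omega^1(\overline{\CC})$, which by Lemma \ref{lemma2} is meromorphic with integer residues. On $\overline{\CC} = \PP^1(\CC)$ every meromorphic differential is rational, so $\omega$ has finitely many poles; at each zero or pole of $f$ it has a simple pole (Lemma \ref{lemma1}), and at each finite-order exponential singularity $\rho$ it has a pole of order $d_\rho + 1 \geq 2$ (by the Proposition). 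I would then perform a partial fraction decomposition of $\omega$: the simple-pole part, together with integer residues summing to zero (the sum of residues of a rational differential on $\PP^1$ vanishes), integrates to $d\log R_0$ for a rational function $R_0$ whose zeros and poles are exactly the points where $\omega$ has a simple pole; the higher-order part (polar parts of order $\geq 2$, which have zero residue) integrates to $dR_1$ for a rational function $R_1$ whose poles are exactly the $\rho \in S(f)$ with $\deg$ of the pole of $R_1$ at $\rho$ equal to $d_\rho$. Thus $d\log(f/(R_0 e^{R_1})) = 0$, so $f = c\, R_0 e^{R_1}$ for a constant $c$, which can be absorbed into $R_0$.

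Next I would verify the degree bound \eqref{inequality}. The support $\supp(\Div(f))$ consists of the simple-pole locus of $\omega$ (the genuine zeros/poles of $f$, not at any $\rho$) together with $S(f)$; a point of $S(f)$ could a priori also carry a simple-pole contribution, but since $f$ has neither zero nor pole at an exponential singularity, by the local form $f = z^n e^{h}$ the integer $n$ there is zero, so in fact the simple-pole locus and $S(f)$ are disjoint. Hence $|R_0^{-1}(0) \cup R_0^{-1}(\infty)| = |\supp(\Div_0(f))| = d_0(f)$ and $\deg R_1 = \sum_{\rho \in S(f)} d_\rho(f)$; since $f \in \cF_n$ means $|\supp(\Div(f))| \leq n$, i.e. $d_0(f) + |S(f)| \leq n$, and $\deg R_1 = \sum_\rho d_\rho \geq |S(f)|$ with equality only when all $d_\rho = 1$... here I need to be careful: the inequality to prove is $d_0(f) + \deg R_1 \leq n$, which is \emph{not} implied by $d_0 + |S(f)| \leq n$ alone. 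So the correct reading of $\cF_n$ must be revisited — or rather, one shows directly that membership in $\cT_n$ forces $d_0 + \deg R_1 \le n$ via the counting already done in (the specialization of) Theorem \ref{thm:main}, where collapsing poles/zeros of rational approximants produce a pole of $d\log f$ of order one more than the number of collapsing points minus one. Concretely, $\cT_n = \cM_n \cup \cT_{n-1}$-type reasoning and the definition of $\cT^{d_0,d_\infty}$ with $d_0 + d_\infty \le n$ give $d_0(f) + \sum_\rho(d_\rho+1) \le n$ when $S(f)\neq\emptyset$, hence a fortiori $d_0(f) + \deg R_1 \le d_0(f) + \sum_\rho d_\rho \le n$. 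I would assemble this from the already-established $\cT^{d_0,d_\infty}$ bounds rather than rederiving it.

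For the converse, given $f = R_0 e^{R_1}$ with $R_0 \neq 0$, $R_1 \in \CC(z)$ satisfying \eqref{inequality}, I would check $f \in \cF_n$ and that its exponential singularities have finite order: the zeros and poles of $f$ occur among the zeros and poles of $R_0$ that are not poles of $R_1$, and the exponential singularities occur exactly at the poles of $R_1$ (where $e^{R_1}$ has a finite-order essential singularity of order equal to the pole order of $R_1$, by the Proposition, unless $R_0$ cancels it — but $R_0$ rational cannot cancel an essential singularity). A short case analysis shows $|\supp(\Div(f))| \leq |R_0^{-1}(0)\cup R_0^{-1}(\infty)| + (\text{number of poles of } R_1) \leq |R_0^{-1}(0)\cup R_0^{-1}(\infty)| + \deg R_1 \leq n$, and each exponential singularity has finite order, so $f \in \cT_n$. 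The final displayed formula $\cT = \{R_0 e^{R_1}\}$ then follows by taking the union over $n$, since any pair $(R_0, R_1)$ of rational functions satisfies \eqref{inequality} for $n$ large enough.

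The main obstacle I anticipate is the bookkeeping around the degree inequality \eqref{inequality} — specifically, making sure the bound $d_0 + \deg R_1 \le n$ genuinely follows from $f \in \cF_n$ (i.e. $|\supp(\Div(f))|\le n$) rather than only the weaker $d_0 + |S(f)| \le n$. The resolution is that $\cT_n \subset \cM_n \cup \cT_{n-1} \subset \bigcup_{d_0+d_\infty \le n}\cT^{d_0,d_\infty}$ from Theorem \ref{thm:main}'s framework already encodes $d_0(f) + \sum_{\rho\in S(f)}(d_\rho+1) \le n$, which is strictly stronger than needed; I would cite that filtration directly. The rest — the partial-fraction construction of $R_0$ and $R_1$ from $d\log f$, and the converse verification — is routine local complex analysis combined with the rationality of differentials on $\PP^1$.
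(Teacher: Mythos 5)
Your core construction is the same as the paper's: decompose the rational differential $d\log f$ on $\PP^1$ into its residue part, which integrates to $d\log R_0$ for a rational $R_0$ (using that the residues are integers summing to zero, so the corresponding degree-zero divisor is principal), and its residue-free higher-order polar part, which integrates to $dR_1$ for a rational $R_1$ with poles at $S(f)$ of orders $d_\rho$; then $f=c\,R_0e^{R_1}$. The paper phrases exactly this via auxiliary rational functions $K_0$ with $\Div K_0=\Div_0(f)$ and $K_1$ matching the polar part of $d\log(f/K_0)$, so on this point your proposal and the paper agree.

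Two steps of your argument are wrong. First, your claim that the residue $n$ of $d\log f$ at an exponential singularity must vanish is false: $f(z)=z\,e^{1/z}$ has an exponential singularity of order $1$ at $0$ with residue $1$ there, and the paper's definition of $\Div_0(f)$ explicitly assigns $n_\rho=$ the residue of $d\log f$ at points of $S(f)$. Hence the simple-pole locus and $S(f)$ need not be disjoint, and a point can lie both in $R_0^{-1}(0)\cup R_0^{-1}(\infty)$ and in $R_1^{-1}(\infty)$; this is repairable but it invalidates the bookkeeping you base on that disjointness. Second, and more seriously, your resolution of the degree inequality rests on the inclusion $\cT_n\subset\cM_n\cup\cT_{n-1}\subset\bigcup_{d_0+d_\infty\le n}\cT^{d_0,d_\infty}$, which reverses what Theorem \ref{thm:main} actually asserts: there the union over $d_0+d_\infty\le n$ is contained in $\cM_n\cup\cT_{n-1}\subset\cT_n$, and only the closure $\overline{\cM_n}$ is shown to lie inside that union. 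Membership in $\cT_n$ as defined (namely $|\supp(\Div(f))|\le n$ with finite-order singularities) does not imply $d_0(f)+\sum_\rho(d_\rho+1)\le n$: a function $e^{R_1}$ with $R_1$ having a single pole of order $2$ lies in $\cT_1$ yet has $d_0+d_\infty=3$ and violates \eqref{inequality} for $n=1$. So the gap you correctly identified is not closed by your patch. For what it is worth, the paper's own proof also never verifies \eqref{inequality}; it only establishes the representation $f=R_0e^{R_1}$, which is all that is needed for the concluding description of $\cT$. A complete proof of the theorem as literally stated would have to either verify the inequality against the stated definition of $\cT_n$ or reconcile the two, and your argument does neither.
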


\begin{proof}For $n=0$ the result is clear, so we assume $n\geq 1$.
Such a function $f=R_0e^{R_1}$ is clearly in $\cT_n$. Conversely, given a 
function $f\in \cT_n$, then the degree of the divisor $\Div_0(f)$ is $0$ and 
we can choose a rational function $K_0$ such that 
$$
\Div K_0 = \Div_0(f) \ .
$$
We can also choose a rational function $K_1$ with polar part matching the polar 
part of $d \log (f/K_0)$, in particular with the same integral residues.
We consider the primitive 
$$
\exp \left (\int K_1 \right ) 
$$
which is of the form
$$
\exp \left (\int K_1 \right ) =L_0 \exp (R_1)
$$
where $L_0$ is a rational function coming from the integration of the 
order $1$ polar part of $K_1$, and $R_1$ from the higher order polar part and 
both are rational functions since $\int K_1$ has no monodromy at the support of $\Div_\infty (f)$, and 
$$
R_1^{-1}(\infty)=\Div_\infty (f) \ .
$$
Now, for $R_0= L_0 K_0$ we have that $R_0e^{R_1}/f$ is a meromorphic function with no zeros nor poles, thus it is a 
constant and we can multiply 
$R_0$ by a non-zero constant so that $f=R_0e^{R_1}$.
\end{proof}

It is instructive to understand how there transalgebraic functions arise as a limit of rational functions.
When $f=R_0e^{R_1}$  we have when $k\to +\infty$,
$$
f_k=R_0 \left (1+\frac{R_1}{k} \right )^k \to f 
$$
and $f_k\in \cM_n(\overline {\CC})$ because of the inequality (\ref{inequality}).

Conversely, let $f_k \to f \in \cF_n(\overline {\CC})$ with $f_k \in \cM_n(\overline {\CC})$. The zeros and poles 
collapse into the divisor of $f$. Only when zeros and poles cohalesce with the sum of residues becoming asymptotically 
constant that we can have the emergence of an exponential singularity of finite order for $f$.

We note a particular case of the previous theorem in the next Corollary.

\begin{corollary}\label{cor:polynomials}
For a positive integer $n\geq 0$, consider the
space of non-zero polynomials $\cP_n \subset \CC[z]^*$ 
normalized having exactly $n$ zeros, not counted with
multiplicity. We endow this space with the topology of uniform
convergence on compact sets off the zeros as before.
Then we have
$$
{\overline {\cP}_n}=\cP_n \cup \cT\cP_n \ ,
$$
where $\cT\cP_n$ is the space of functions of the form $f=P_0e^{P_1}$ where
$P_0 \in \CC[z]^*$ is non-zero and $P_1\in \CC (z)$, with
$$
|P_0^{-1}(0)| +\deg P_1 +1\leq n \ .
$$
\end{corollary}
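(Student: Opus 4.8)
The plan is to read off the corollary from Theorems~\ref{thm:main} and~\ref{thm:transalgebraic_Riemann_sphere} by specializing to the rational functions whose only pole on $\overline{\CC}$ lies at $\infty$, namely the polynomials. A non-constant polynomial with $m$ distinct finite zeros has its divisor supported on those $m$ points together with $\infty$, so under the obvious identification $\cP_n\subset\cM_{n+1}(\overline{\CC})$; the case $n=0$ is immediate since $\cP_0=\CC^{*}$ is already closed. Hence $\overline{\cP_n}\subset\overline{\cM_{n+1}(\overline{\CC})}$, and Theorem~\ref{thm:transalgebraic_Riemann_sphere} shows every element of this closure has the form $R_0e^{R_1}$ with $R_0\in\CC(z)^{*}$ and $R_1\in\CC(z)$. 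It then remains to decide, among such functions, which ones are genuinely limits of \emph{polynomials}; this is where the location of the only pole enters.

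For the inclusion $\cT\cP_n\subset\overline{\cP_n}$ I would use the Eulerian approximants recorded just after Theorem~\ref{thm:transalgebraic_Riemann_sphere}. Given $f=P_0e^{P_1}$ satisfying the stated degree bound, the functions
$$
f_k=P_0\left(1+\frac{P_1}{k}\right)^{k}
$$
are polynomials — this is exactly the point at which one needs $P_1$ itself to be a polynomial — they converge to $f$ uniformly on compact subsets off the zeros, and the degree bound on $(P_0,P_1)$ keeps the number of distinct zeros of $f_k$ at most $n$; thus $f_k\in\cP_n$ for large $k$, and since $\cP_n\subset\overline{\cP_n}$ this inclusion follows.

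For the reverse inclusion, take $(P_k)\subset\cP_n$ with $P_k\to f$. Applying Theorem~\ref{thm:main} inside $\cM_{n+1}(\overline{\CC})$ and then Theorem~\ref{thm:transalgebraic_Riemann_sphere} already gives $f=R_0e^{R_1}\in\cT_n$ with controlled support. The extra ingredient, special to polynomials, is that $f$ has \emph{no finite pole and no finite exponential singularity}; granting this, $R_0$ is holomorphic on $\CC$ with finitely many zeros, hence a polynomial $P_0$, and $R_1$ is holomorphic on $\CC$, hence a polynomial $P_1$, so $f\in\cT\cP_n$. To prove the extra ingredient I would argue locally: around a finite point $z_0$ that is not a zero of $f$, choose a small circle on which $P_k\to f$ uniformly; by the maximum principle the $P_k$ are uniformly bounded on the enclosed disc, hence so is $f$, and by Riemann's removability theorem $f$ extends holomorphically across $z_0$, excluding at once a finite pole and a finite exponential singularity. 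Equivalently, in the logarithmic-derivative language of the preceding lemmas: the residues of $d\log P_k$ at finite points are multiplicities of zeros, hence $\ge 0$, so contour integration forces $d\log f$ to have non-negative residue at every finite point, ruling out finite poles; and since the $P_k$ have no finite poles to coalesce with their zeros, the lemma just before the proof of Theorem~\ref{thm:main} forbids a finite exponential singularity. The support datum of Theorem~\ref{thm:transalgebraic_Riemann_sphere}, once the pole at $\infty$ carried by every non-constant polynomial is included in the cardinality count, turns into the degree inequality of the corollary, and the two inclusions combine to give $\overline{\cP_n}=\cP_n\cup\cT\cP_n$.

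The real obstacle is this localization — showing that the exceptional set of a limit of polynomials cannot migrate to a finite point but must stay at $\infty$; the rest is an appeal to the two preceding theorems plus bookkeeping with the degree bound. I expect the maximum-principle (equivalently, residue-sign) argument to be the crux.
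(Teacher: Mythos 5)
Your proposal follows essentially the same route as the paper: identify $\cP_n$ inside $\cM_{n+1}(\overline{\CC})$ (the pole at $\infty$ supplying the extra point of support), apply Theorems~\ref{thm:main} and~\ref{thm:transalgebraic_Riemann_sphere} to write any limit as $R_0e^{R_1}$, and then use the fact that a limit of polynomials is holomorphic on $\CC$ to force $R_0$ and $R_1$ to be polynomials, the degree inequality being the same bookkeeping with the point at infinity. Where you go beyond the paper is in actually justifying the entireness of the limit: the paper's proof simply asserts that ``the limit is holomorphic on $\CC$'', whereas your maximum-principle and Riemann-removability argument on a small circle around a finite accumulation point of zeros (equivalently, the observation that polynomials have no finite poles available to coalesce with zeros, so the lemma preceding Theorem~\ref{thm:main} forbids a finite exponential singularity) supplies the missing step; you also make explicit the inclusion $\cT\cP_n\subset\overline{\cP_n}$ via the Euler approximants, which the paper's proof of the corollary omits and leaves to the discussion following Theorem~\ref{thm:transalgebraic_Riemann_sphere}. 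Two cautions. First, $P_0\left(1+\frac{P_1}{k}\right)^{k}$ has at most $|P_0^{-1}(0)|+\deg P_1$ \emph{distinct} zeros, which the corollary's inequality bounds by $n-1$, so it is not literally an element of $\cP_n$ (which requires exactly $n$ distinct zeros); you must pad with auxiliary simple zeros escaping to $\infty$, say factors $\left(1-\frac{z}{a_{k,j}}\right)$ with $a_{k,j}\to\infty$, which converge to $1$ on compact sets and keep the supports Hausdorff-converging to $\supp(\Div(f))\ni\infty$. Second, the ``bookkeeping'' you defer to is exactly the spot where the paper's own count is least transparent (a literal application of the inequality of Theorem~\ref{thm:transalgebraic_Riemann_sphere} at level $n+1$ yields $|P_0^{-1}(0)|+\deg P_1\leq n$, and the additional $+1$ appearing in the corollary is inserted in the paper without comment), so if you intend the proof to be complete you should carry out that count explicitly rather than cite it.
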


\begin{proof}

From the previous Theorem we get that all limits are of the form $f=R_0e^{R_1}$ but the limit 
is holomorphic on $\CC$ hence $R_0=P_0$ and $R_1=P_1$ are polynomials, and $f=P_0e^{P_1}$ and
$P_0$ is not identically $0$.
Each polynomial in $P\in \cP_n$ is a rational function with
$$
|P^{-1}(0) \cup P^{-1}(\infty )|= |P^{-1}(0)| \cup \{\infty \} =n+1 \ .
$$
We apply the general theorem getting
$$
|P_0^{-1}(0) \cup P_0^{-1}(\infty )|+\deg P_1 +1= |P_0^{-1}(0)|+\deg P_1 +2\leq n+1 
$$
and the result follows,
$$
|P_0^{-1}(0)|+\deg P_1 +1\leq n 
$$
\end{proof}

\bigskip

\textbf{A particular case.}

\medskip

Let $(P_k)$ be a sequence of polynomials with exactly $n$ zeros, all escaping
to $\infty$. If they are normalized conveniently in order to have a limit (for
example, such that $P_k(0)=1$, $P_k'(0)=1$), then the limit has no finite zero, nor pole, thus the limit must
be of the form $\exp (P_1)$ where $P_1$ is a polynomial of degree at most $n$.

\medskip

\textbf{Historical comments.}

\medskip

 Fields generated by functions with exponential singularities of finite order on a 
compact Riemann surfaces $X$ of genus $g>0$ have been studied by  
P. Cutillas Ripoll  in a series of remarkable papers starting 
with \cite{Cu1} (see also \cite{Cu2} and \cite{Cu3} and more recent articles). Cutillas proves the existence 
of minimal field of functions associated to the compact Riemann surface containing the space 
of meromorphic functions and realizing any divisor on $X-S$, $S$ finite and non-empty. 
These minimal fields are all isomorphic and  independent of $S$, so there is an abstract Cutillas field $\cC(X)$ associated to any compact 
Riemann surface $X$. 
Moreover, he generates these function fields using functions in $\cT(X,S)$, i.e. 
with exponential singularities located at $S$. This fantastic result seems to not be well 
known\footnote{Cutillas article \cite{Cu1} from 1984 has no citations according to Math Reviews, which shows how misguided
is modern research.}.
In \cite{Cu1} it is pointed out that these functions have been considered by Clebsch and Gordan, and also by Weierstrass.
They appear in chapter VII of the treatise by Baker  \cite{Ba}  where in 
footnote remarks there is some not very precise reference to 
Weierstrass work (see also \cite{Ba1907}). Continuing the work of Baker, that was somewhat forgotten, 
functions with exponential singularities on hyper-elliptic curves have been used, under the name of Baker-Akhiezer functions, 
by the russian school to construct explicit solutions of KP and KdV equations, see the surveys \cite {Du} and \cite{BEL2012}. 

Transalgebraic functions on the Riemann sphere were studied by Nevanlinna (\cite{Ne1932}, \cite{Ne1953}), 
and Taniguchi (\cite{Ta2001}, \cite{Ta2002}). They 
also appear naturally as uniformizations of log-Riemann surfaces defined
by K. Biswas and the author (\cite{BPM1}, \cite{BPM2}, \cite{BPM3}) which roughly speaking are Riemann surfaces with 
canonical flat charts. In particular, in \cite{BPM2}  
the Caratheodory convergence  of log-Riemann surfaces is defined, and a generalization to this setting of 
Caratheodory's Kernel Convergence Theorem is proved (Theorems 1.1 and 1.2 in \cite{BPM2}).  As Corollary (Corollary 
1.3 in \cite{BPM2})) a purely geometric proof of Euler's limit (which is central to this article) is obtained,
$$
e^{z}=\lim_{n\to +\infty} \left (1+\frac{z}{n}\right )^n
$$
Transalgebraic curves are defined in \cite{BPM3}. They generalize classical algebraic curves allowing 
infinite ramification points. More precisely, they are defined as log-Riemann surfaces having a finite number of finite (algebraic) and 
infinite (transcendental) ramification points. The Caratheodory closure of algebraic curves 
with uniformly bounded number of ramification points are proved to be transalgebraic curves (Theorem 2.11 in 
\cite{BPM3}). This result is the geometric counterpart of the previous Corollary \ref{cor:polynomials}. The 
more general closeness Theorem \ref{thm:main} is related to uniformizations of higher 
genus log-Riemann surfaces and more precisely to the main 
Theorem in \cite{BPM4}. 
More precisely, it is proved in \cite{BPM4} that any log-Riemann surface of finite topology (finitely generated fundamental group), 
is biholomorphic to a pointed compact Riemann surface $X-S$ equipped with a transalgebraic differential form $\omega\in \cT\Omega^1(X)$, i.e. 
a differential form that is locally of the form $\omega=f(z) dz$ with $f$ with exponential singularities, holomorphic out of $S$.  
Some of the transalgebraic properties of periods of transalgebraic curves are discussed in \cite{BPM5}.

\medskip

\section{E\~ne product structure on the transalgebraic class of $\PP^1(\CC)$.} \label{sec:infi}

In this section we extend the e\~ne product to the transalgebraic class of the Riemann 
sphere $\cT(\overline{\CC})$. The starting observation is the remarkable 
Convolution formula from \cite{PM1} (Theorem 5.3)
$$
e^{\frac{z}{1-z}}\star f(z) =\exp \left (\sum_\alpha
\frac{z}{\a-z}\right )=\prod_\alpha e^{
\frac{z}{\a-z}}
$$
where $(\alpha)$ is the sequence of zeros of $f$.
Observe that $f(z)=e^{\frac{z}{1-z}} \in \cT^{0,1}\subset \cT_1$ is a non-meromorphic transalgebraic function
with a single point support for its divisor
$$
\Div (f)= (1)_\infty
$$
The function $f(z)=e^{\frac{z}{1-z}}$ is just the exponential function pre-composed by the Moebius transform
mapping $\infty$ to $z=1$ and tangent to the identity at $0$. The convolution formula 
is what to expect if we consider the exponential 
singularity at $1$ of $e^{\frac{z}{1-z}}$ as a zero of infinite order.

We define a sequence of rational functions $(R_k)_{\geq 1}$ appearing in Euler's computations in
\cite{Eu1768} p.85 (it is indeed $-z^{-1}R_k(-z)$ that Euler considers) for the summation of integer powers (see also \cite{Ca2002} 
where the rational functions  $\Phi_k$ are considered, and $R_k =-\Phi_{k-1}$),
$$
R_1(z)=\frac{z}{z-1}=-\sum_{n=1}^{+\infty} z^n
$$
and for $k\geq 0$,
$$
R_{k} = R_1\star_e \ldots \star_e R_1=R_1^{\star_e k} 
$$
Therefore, for $k\geq 1$,
$$
R_{k} (z) =-\sum_{n=1}^{+\infty} n^{k-1} z^n
$$
and we can also define these rational functions by $R_k(0)=0$ and 
\begin{equation}\label{eq:recurrence}
R_{k+1} =z \frac{d R_{k}}{dz}  
\end{equation}
There follow the first seven rational function listed by Euler in \cite{Eu1768}
\begin{align*}
R_1(z) &=-\frac{z}{1-z} \\
R_2(z) &=-\frac{z}{(1-z)^2} \\
R_3(z) &=-\frac{z(1+z)}{(1-z)^3}\\
R_4(z) &=-\frac{z(1+4z+z^2)}{(1-z)^4} \\
R_5(z) &=-\frac{z(1+11z+11z^2+z^3)}{(1-z)^5} \\ 
R_6(z) &=-\frac{z(1+26z+66z^2+26z^3+z^4)}{(1-z)^6} \\
R_7(z) &=-\frac{z(1+57z+302z^2+302z^3+57z^4+z^5)}{(1-z)^7}
\end{align*}
In general we have,
\begin{proposition}\label{prop:Euler_rat}
For $k\geq 0$, we have
\begin{equation}\label{eq:rec_rat}
R_k(z)=-\frac{zP_k(z)}{(1-z)^k} 
\end{equation}
where $P_k \in \ZZ[z]$ and for $k\geq 1$,
\begin{equation}\label{eq:rec_pol}
P_{k+1}(z)=(1+(k-1)z)P_k(z)+z(1-z)P'_k(z)
\end{equation}
and $P_1=1$. We have for $k\geq 2$, $\deg P_k =k-2$, $P_k(1)=(k-1)!$, $P_k(0)=1$, and the functional 
equations,
\begin{align*}
R_k(z^{-1}) &=(-1)^k R_k(z) \\
P_k(z^{-1}) &=z^{2-k}P_k(z)  
\end{align*}
Hence, for $k\geq 2$, $R_k$ vanishes at $0$ and $\infty$, and 
has only one pole of order exactly $k$ at $1$.
\end{proposition}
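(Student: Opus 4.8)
The proof is a straightforward induction on $k$, bootstrapped by the recurrence $R_{k+1} = z\,\frac{dR_k}{dz}$ from (\ref{eq:recurrence}). First I would plug the ansatz (\ref{eq:rec_rat}) into that recurrence: differentiating $-zP_k(z)/(1-z)^k$ and multiplying by $z$, one collects terms over the common denominator $(1-z)^{k+1}$, and the numerator that emerges is exactly $-z\bigl[(1+(k-1)z)P_k(z)+z(1-z)P'_k(z)\bigr]$, which establishes both the form (\ref{eq:rec_rat}) for $R_{k+1}$ and the polynomial recurrence (\ref{eq:rec_pol}) simultaneously, provided one checks the base case $R_1(z)=-z/(1-z)$, $P_1=1$. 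That $P_{k+1}\in\ZZ[z]$ when $P_k\in\ZZ[z]$ is immediate from (\ref{eq:rec_pol}).

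The remaining assertions are all consequences of (\ref{eq:rec_pol}) by induction. For the degree: if $\deg P_k = k-2$ with leading coefficient $\ell_k$, then in $(1+(k-1)z)P_k(z)$ the top term is $(k-1)\ell_k z^{k-1}$ and in $z(1-z)P'_k(z)$ the top term is $-(k-2)\ell_k z^{k-1}$; these do not cancel (their sum is $\ell_k z^{k-1}$), so $\deg P_{k+1}=k-1=(k+1)-2$, with the base case $\deg P_2 = 0$ checked directly from $P_2=(1)\cdot P_1 + z(1-z)\cdot 0 = 1$. For $P_k(0)=1$: setting $z=0$ in (\ref{eq:rec_pol}) gives $P_{k+1}(0)=P_k(0)$. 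For $P_k(1)=(k-1)!$: setting $z=1$ in (\ref{eq:rec_pol}) kills the $z(1-z)P'_k$ term and gives $P_{k+1}(1)=(1+(k-1))P_k(1)=k\,P_k(1)$, so with $P_2(1)=1=1!$ we get $P_k(1)=(k-1)!$ by induction.

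For the functional equations, the cleanest route is via the generating-function identity $R_k(z)=-\sum_{n\ge 1} n^{k-1}z^n$, valid for $k\ge 1$ as already recorded in the excerpt. One has $R_k(z^{-1}) + (-1)^k R_k(z)$, interpreted as rational functions, should vanish; I would verify this by noting that $R_k$ and $(-1)^kR_k(z^{-1})$ are both rational functions with the same poles (only at $z=1$, to order $k$) and checking the relation $R_k(z^{-1})=(-1)^kR_k(z)$ directly from the closed form $R_k(z)=-zP_k(z)/(1-z)^k$: since $(1-z^{-1})^k = (-1)^k z^{-k}(1-z)^k$, one computes $R_k(z^{-1}) = -z^{-1}P_k(z^{-1})\big/\bigl((-1)^k z^{-k}(1-z)^k\bigr) = (-1)^k z^{k-1}P_k(z^{-1}) \cdot \bigl(-1/(1-z)^k\bigr)$, so the claimed identity $R_k(z^{-1})=(-1)^kR_k(z)$ is equivalent to $z^{k-1}P_k(z^{-1}) = z\,P_k(z)$, i.e. $P_k(z^{-1})=z^{2-k}P_k(z)$. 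Thus the two functional equations are equivalent, and I would prove $P_k(z^{-1})=z^{2-k}P_k(z)$ by induction using (\ref{eq:rec_pol}): substituting $z\mapsto z^{-1}$ in the recurrence and multiplying through by the appropriate power of $z$, the inductive hypothesis on $P_k$ converts the right-hand side into $z^{1-k}\bigl[(1+(k-1)z)P_k(z)+z(1-z)P'_k(z)\bigr]=z^{1-k}P_{k+1}(z)$, which is the desired identity for $k+1$ (one must be careful transforming $P'_k(z^{-1})$, using $\frac{d}{dz}P_k(z^{-1}) = -z^{-2}P'_k(z^{-1})$ together with the inductive palindromy relation). The base case $P_2(z^{-1})=1=z^0 P_2(z)$ is trivial.

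Finally, the concluding sentence: from (\ref{eq:rec_rat}) and $P_k(0)=1\ne 0$, $P_k(1)=(k-1)!\ne 0$, the numerator $zP_k(z)$ vanishes at $z=0$ and the function vanishes at $z=\infty$ because $\deg(zP_k) = k-1 < k = \deg (1-z)^k$; and the only pole is at $z=1$, of order exactly $k$ since $P_k(1)\ne 0$. The main obstacle, though minor, is bookkeeping in the functional-equation induction — keeping track of the chain rule for $P'_k(z^{-1})$ and the powers of $z$ — but there is no conceptual difficulty.
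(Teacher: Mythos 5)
Your proposal is correct and follows essentially the same route as the paper: induction on the differential recurrence $R_{k+1}=z\,dR_k/dz$ to get (\ref{eq:rec_rat}) and (\ref{eq:rec_pol}), then evaluation at $z=0$ and $z=1$ for the special values. The only (minor) divergence is in the functional equations, where the paper shows $Q_k(z)=(-1)^{k+1}R_k(z^{-1})$ satisfies the same differential recurrence and initial conditions as $R_k$, whereas you prove the equivalent palindromy $P_k(z^{-1})=z^{2-k}P_k(z)$ by induction on (\ref{eq:rec_pol}); both work, and you are in fact more explicit than the paper on the degree claim $\deg P_k=k-2$.
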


\begin{proof}
By induction we get the recurrence (\ref{eq:rec_rat}), and the polynomial 
recurrence (\ref{eq:rec_pol}) follows from it. Making $z=1$ in (\ref{eq:rec_pol})
we have for $k\geq 1$, $P_{k+1}(1)=k P_k(1)$ and $P_1(1)=1$, hence $P_k(1)=(k-1)!$.
Making $z=0$ in (\ref{eq:rec_pol}) we have for $k\geq 1$, $P_{k+1}(0)=P_k(0)=P_1(0)=1$.
The two  functional equations are equivalent. If we define $Q_k(z)=(-1)^{k+1} R_k(z^{-1})$, we check
$$
Q_{k+1}(z)=zQ_k'(z) 
$$
and $Q_k(0)=0$, $Q_2=R_2$, thus $Q_k=R_k$.

Now, it is clear that for $k\geq 2$, $R_k$ vanishes at $0$ and $\infty$, and 
has only one pole of order exactly $k$ at $1$.
\end{proof}

Following Euler's intuition described in section \ref{sec:eule}, and the results from the previous section,
it is natural to define a zero of infinite order \textit{at a finite place} $z_0\in \CC$ "\`a la Euler":

\begin{definition} We define symbolically for $z_0\in \CC^*$ 
$$
\left (1-\frac{z}{ z_0}\right )^\infty \equiv \exp \left (\frac{z}{z_0-z} \right ) =e^{R_1(z/z_0)}
 \ .
$$
\end{definition}

With this notation, the convolution formula can be rewritten as,
\begin{equation*}
\left (1-\frac{z}{z_0}\right )^\infty \star f(z) =\prod_\alpha \left (1-\frac{z}{z_0 \alpha}\right )^\infty 
\end{equation*}
which is just distributivity with respect  to infinite products and transalgebraic divisors
\begin{align*}
\left (1-\frac{z}{z_0}\right )^\infty \star f(z) &= \left (1-\frac{z}{z_0}\right )^\infty \star 
\prod_\alpha \left ( 1-\frac{z}{\alpha }\right ) \\
&= \prod_\alpha \left (1-\frac{z}{z_0}\right )^\infty \star \left ( 1-\frac{z}{\alpha }\right ) \\
&=\prod_\alpha \left (1-\frac{z}{z_0 \alpha}\right )^\infty \ .
\end{align*}

Now we define higher order zeros,

\begin{definition} For $z_0\in \CC^*$, we define symbolically,
$$
\left (1-\frac{z}{z_0}\right )^{k.\infty}  \equiv  e^{R_k(z/z_0^k)} \in \cT^{0,k}\cap \cT_1 \subset \cT
$$
\end{definition}

Note that $f_k(z)=\left (1-\frac{z}{z_0}\right )^{k.\infty}$ is the function of exponential type 
with a single point transcendental divisor of order $k$  at $z^k_0\in \CC^*$,
$$
\Div (f_k)=\Div_\infty (f_k) = k.(z^k_0)_\infty
$$
Note the new fact that taking the e\~ne power of a simple infinite order zero at $z_0$, changes the support of the 
new infinite zero of order $k$ to $z_0^k$.
From the exponential form of the e\~ne product we get

\begin{proposition}
We have for $k\geq 0$ and $z_0\in \CC^*$,
 $$
\left ( e^{\frac{z}{z_0-z} }\right )^{\star k}= e^{\frac{z}{z_0-z}} \star \ldots
\star e^{\frac{z}{z_0-z}} =e^{R_k(z/z_0)} \ ,
$$
in particular
$$
\left ( e^{\frac{z}{1-z}} \right )^{\star 0}=1-z \ .
$$
and this can be written \`a la Euler
$$
\left (1-\frac{z}{z_0}\right )^{k.\infty} =
\left (\left (1-\frac{z}{z^k_0}\right )^\infty \right )^{\star k}
= e^{R_k(z/z_0^k)}
$$
More generally, for $n\geq 1$ we have, 
$$
e^{ R_{k_1}(z/z_1)}  \star \ldots \star  e^{ R_{k_n}(z/z_n)} = 
e^{ R_{k_1+\ldots +k_n} (z/(z_1\ldots z_n))} \ .
$$
or, \`a la Euler,
$$
\left (1-\frac{z}{z_1}\right )^{k_1.\infty} \star \ldots \star \left (1-\frac{z}{z_n}\right )^{k_n.\infty}= 
\left (1-\frac{z}{z_1\ldots z_n}\right )^{(k_1+\ldots +k_n).\infty} \ .
$$
\end{proposition}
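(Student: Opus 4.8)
The plan is to reduce everything to the linearized exponential form $\star_e$ and the recursion $R_{k+1}=z\,dR_k/dz$, and then do a clean bookkeeping of the scaling variables. The key algebraic input is Theorem 4.1 of \cite{PM1}: if $f=e^{F}$ and $g=e^{G}$ then $f\star g=e^{F\star_e G}$, together with the fact (already recorded in the excerpt) that $R_k=R_1^{\star_e k}$ and $R_k(z)=-\sum_{n\geq 1}n^{k-1}z^n$. So the whole statement is equivalent to the single identity in the ring of formal power series without constant term,
\[
R_{k_1}(z/z_1)\star_e\cdots\star_e R_{k_n}(z/z_n)=R_{k_1+\cdots+k_n}\!\left(z/(z_1\cdots z_n)\right),
\]
and the ``\`a la Euler'' reformulations are then just the definitions $\left(1-\frac{z}{z_0}\right)^{k.\infty}\equiv e^{R_k(z/z_0^k)}$ unwound.

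\textbf{Step 1: the scaling lemma.} First I would observe that the operation $\star_e$ interacts with dilations in the variable in a controlled way. Writing a series as $F(z)=\sum_{k\geq 1}F_kz^k$, the definition $F\star_e G(z)=-\sum_k kF_kG_k z^k$ is diagonal in the monomial basis, so if $F(z)=\Phi(\lambda z)$ and $G(z)=\Psi(\mu z)$ with $\Phi(z)=\sum\phi_kz^k$, $\Psi(z)=\sum\psi_kz^k$, then $F\star_e G(z)=-\sum_k k\lambda^k\mu^k\phi_k\psi_k z^k=(\Phi\star_e\Psi)(\lambda\mu z)$. In particular $R_a(z/z_1)\star_e R_b(z/z_2)=(R_a\star_e R_b)(z/(z_1z_2))$.

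\textbf{Step 2: the exponent-addition lemma.} Next I would prove $R_a\star_e R_b=R_{a+b}$ for $a,b\geq 1$. The cleanest route is the explicit formula: $(R_a\star_e R_b)(z)=-\sum_{n\geq 1}n\cdot(-n^{a-1})(-n^{b-1})z^n=-\sum_{n\geq 1}n^{a+b-1}z^n=R_{a+b}(z)$. (Alternatively one invokes directly $R_k=R_1^{\star_e k}$ and associativity/commutativity of $\star_e$, which holds because $\star_e$ is diagonal, hence manifestly associative and commutative on each monomial.) Combining Steps 1 and 2 by an immediate induction on $n$ gives the formal identity displayed above.

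\textbf{Step 3: transfer back through $\exp$ and unwind notation.} By Theorem 4.1 of \cite{PM1}, applying $\exp$ turns the $\star_e$-identity into $e^{R_{k_1}(z/z_1)}\star\cdots\star e^{R_{k_n}(z/z_n)}=e^{R_{k_1+\cdots+k_n}(z/(z_1\cdots z_n))}$, which is the main displayed identity of the proposition; the special case $n$ copies of the same $e^{z/(z_0-z)}=e^{R_1(z/z_0)}$ gives $(e^{z/(z_0-z)})^{\star k}=e^{R_k(z/z_0)}$, and for $k=0$ the empty $\star$-product is $1-z$ (the neutral element of $\star$, as in \cite{PM1}), which is $e^{R_0(z)}$ with $R_0(z)=\log(1-z)=-\sum_{n\geq1}z^n/n$, consistent with $R_0=R_1^{\star_e 0}$. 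Finally the ``\`a la Euler'' lines are pure substitution: $\left(1-\frac{z}{z_0}\right)^{k.\infty}\equiv e^{R_k(z/z_0^k)}$, so writing $z_j$ in place of $z_j^{k_j}$ where appropriate, $\left(1-\frac{z}{z_1}\right)^{k_1.\infty}\star\cdots=e^{R_{k_1}(z/z_1^{k_1})}\star\cdots=e^{R_{k_1+\cdots+k_n}(z/(z_1^{k_1}\cdots z_n^{k_n}))}=\left(1-\frac{z}{z_1^{k_1}\cdots z_n^{k_n}}\right)^{(k_1+\cdots+k_n).\infty}$; setting all $k_j=1$ recovers the stated form, and setting $n=k$ with $z_j=z_0^k$ (so $z_j^{k_j}=z_0^k$... ) — more simply, the single-factor case $\left(1-\frac{z}{z_0}\right)^{k.\infty}=\big(\left(1-\frac{z}{z_0^k}\right)^{\infty}\big)^{\star k}$ follows by taking $z_1=\cdots=z_k=z_0^k$ in the $k$-fold product and using $R_1^{\star_e k}=R_k$.

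\textbf{The main obstacle} is essentially bookkeeping rather than a deep difficulty: one must be careful that the variable rescalings commute correctly with $\star_e$ and that $e^{R_k(z/z_0)}$ versus $e^{R_k(z/z_0^k)}$ are not confused (this is exactly the ``new fact'' flagged before the proposition, that the support of the transcendental divisor moves from $z_0$ to $z_0^k$). The one genuine point requiring care is the convergence/well-definedness: $R_k$ is a rational function, so $e^{R_k(z/z_0)}$ is a bona fide transalgebraic function in $\cT^{0,k}\cap\cT_1$ (Proposition \ref{prop:Euler_rat}), and Theorem 4.1 of \cite{PM1} is stated for formal power series in exponential form; since all the series $R_k$ and their $\star_e$-products have positive radius of convergence (they are rational), passing from the formal identity to an identity of transalgebraic functions on $\overline{\CC}$ is justified by analytic continuation of the meromorphic exponents, and the divisors then match by Proposition \ref{prop:Euler_rat}.
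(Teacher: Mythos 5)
Your Steps 1 and 2 are correct, and they are precisely the computation the paper leaves implicit (its ``proof'' is the one line ``the proof is clear from the definitions,'' followed by a heuristic check): since $\star_e$ is diagonal in the monomial basis, $R_a(z/z_1)\star_e R_b(z/z_2)=-\sum_n n\cdot n^{a-1}n^{b-1}(z_1z_2)^{-n}z^n=R_{a+b}(z/(z_1z_2))$, and exponentiating via Theorem 4.1 of \cite{PM1} yields the ``More generally'' display. Your identification of $1-z$ as the neutral element handling the $k=0$ case is also right.

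The gap is in Step 3, where you contradict your own scaling lemma. By Steps 1--2, $k$ copies of $e^{R_1(z/z_0)}$ give $e^{R_k(z/z_0^k)}$ --- the singularity migrates to $z_0^k$, which is exactly the ``new fact'' the paper flags just before the proposition --- yet you assert the first display $\left(e^{z/(z_0-z)}\right)^{\star k}=e^{R_k(z/z_0)}$ without comment. Likewise your final chain ends with $e^{R_{K}(z/(z_1^{k_1}\cdots z_n^{k_n}))}=\left(1-\frac{z}{z_1^{k_1}\cdots z_n^{k_n}}\right)^{K.\infty}$ with $K=k_1+\cdots+k_n$, but by the very definition you quote the right-hand side equals $e^{R_K(z/(z_1^{k_1}\cdots z_n^{k_n})^{K})}$, whose exponent of the $z_j$ does not match; and $\bigl(\left(1-\frac{z}{z_0^k}\right)^{\infty}\bigr)^{\star k}=\left(e^{R_1(z/z_0^k)}\right)^{\star k}=e^{R_k(z/z_0^{k^2})}$, not $e^{R_k(z/z_0^k)}$. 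Your trailing parenthesis ``(so $z_j^{k_j}=z_0^k$\dots)'' suggests you sensed the mismatch but did not resolve it. In fairness, the printed proposition is itself inconsistent on exactly these points with the definition $\left(1-\frac{z}{z_0}\right)^{k.\infty}\equiv e^{R_k(z/z_0^k)}$ (its ``\`a la Euler'' displays would instead require the convention $e^{R_k(z/z_0)}$, under which the singularity sits at $z_0$ and the last theorem of the paper also becomes coherent); a complete argument must fix one convention and restate the symbolic identities accordingly, or explicitly flag the discrepancy --- writing both versions as if they agreed, as you do, proves neither.
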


The proof is clear from the definitions. It is satisfactory  to check that this is the expected 
result from Euler heuristics. For example, we have the following formal computation \`a la Euler,
for $z_1, z_2 \in \CC^*$,

\begin{align*}
\left (1-\frac{z}{z_1}\right )^\infty \star
\left (1-\frac{z}{z_2}\right )^\infty &=
\prod_\infty \left (1-\frac{z}{z_1}\right ) \ \star \
\prod_\infty \left (1-\frac{z}{z_2}\right ) \\
&= \prod_{\infty , \infty} \left (1-\frac{z}{z_1}\right )
\star \left (1-\frac{z}{z_2}\right ) \\
&=\prod_{\infty , \infty} \left (1-\frac{z}{z_1 z_2}\right ) \\
&= \left (1-\frac{z}{z_1 z_2}\right )^{\infty . \infty} \\
&= \left (1-\frac{z}{z_1 z_2}\right )^{2. \infty } 
\end{align*}

Now, it easy to see that these transalgebraic functions generate the multiplicative 
group of transalgebraic functions on the Riemann sphere:

\begin{theorem}\label{thm:transgenerators}
The multiplicative group $(\cT(\overline{\CC}), .)$ is generated by the non-zero meromorphic functions 
$\cM(\overline{\CC})^*$, $e^{P(z)}$, $e^{P(1/z)}$, with $P\in \CC[z]$, and for $z_0\in \CC^*$, $\alpha \in \CC$, $k\geq 1$,
$$
e^{\alpha R_k(z/z_0)}=\left (\left (1-\frac{z}{z_0}\right )^{k.\infty}\right )^\alpha =\left (1-\frac{z}{z_0}\right )^{\alpha .(k.\infty )}
$$
\end{theorem}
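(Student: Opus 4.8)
\textbf{Proof plan for Theorem \ref{thm:transgenerators}.}

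The plan is to use the structural description from Theorem \ref{thm:transalgebraic_Riemann_sphere}, which says that every $f\in\cT(\overline{\CC})=\cT$ can be written as $f=R_0e^{R_1}$ with $R_0,R_1\in\CC(z)$, $R_0\not\equiv0$. Since $R_0\in\cM(\overline{\CC})^*$ is already among the listed generators, the whole task reduces to showing that $e^{R_1}$ lies in the group generated by the listed elements, for an arbitrary rational function $R_1$. The key is a partial fraction decomposition of $R_1$: write
$$
R_1(z)=P_\infty(z)+c+\sum_{j}\sum_{m=1}^{k_j}\frac{a_{j,m}}{(z-z_j)^m},
$$
where $P_\infty\in\CC[z]$ is the polynomial part at $\infty$, $c\in\CC$ is a constant, $z_j\in\CC^*$ are the finite poles (a pole at $z=0$, if present, contributes a polynomial in $1/z$ after reindexing), and $a_{j,m}\in\CC$. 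Exponentiating turns this sum into a \emph{product}: $e^{R_1}=e^{c}\,e^{P_\infty(z)}\cdot\prod_j e^{\,\sum_m a_{j,m}(z-z_j)^{-m}}$. The constant $e^c\in\CC^*\subset\cM(\overline{\CC})^*$ and $e^{P_\infty(z)}$ are listed generators; a pole at the origin produces $e^{P(1/z)}$, also listed. So everything comes down to the single-point factor $e^{\,\sum_{m=1}^{k}a_m(z-z_0)^{-m}}$ for a fixed $z_0\in\CC^*$.

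The heart of the argument is therefore: for fixed $z_0\in\CC^*$, the functions $\{e^{a/(z-z_0)^m}:a\in\CC,\ 1\le m\le k\}$ all lie in the group generated by the $e^{\alpha R_k(z/z_0)}$ together with meromorphic functions. First I would record that, by Proposition \ref{prop:Euler_rat}, each $R_k(z/z_0^{?})$ has its unique pole at a prescribed point; more directly, $R_1(w)=w/(w-1)=1+1/(w-1)$, and the recurrence $R_{k+1}=w\,dR_k/dw$ shows inductively that $R_k(w)=(\text{polynomial in }w)+(\text{polynomial of degree }k\text{ in }(w-1)^{-1})$ with a \emph{nonzero} coefficient on $(w-1)^{-k}$; concretely $R_k$ has a pole of exact order $k$ at $w=1$ and is otherwise holomorphic on $\CC^*$. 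Substituting $w=z/z_0$ (so $w-1=(z-z_0)/z_0$) gives that $R_k(z/z_0)$ equals a meromorphic function on $\overline{\CC}$ whose only non-polynomial, non-constant feature is a pole of exact order $k$ at $z_0$, with leading term a nonzero multiple of $(z-z_0)^{-k}$. Hence the $\CC$-span of $\{1,R_1(z/z_0),\dots,R_k(z/z_0)\}$, \emph{modulo} functions that are rational on all of $\overline{\CC}$ (equivalently, modulo $\CC[z]+\CC[1/z]+\CC$, which exponentiate to listed generators), coincides with the span of $\{(z-z_0)^{-1},\dots,(z-z_0)^{-k}\}$ — this is a triangularity statement in the ``variable'' $(z-z_0)^{-1}$: $R_m(z/z_0)$ has a $(z-z_0)^{-m}$ term and no higher-order pole. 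Thus any principal part $\sum_{m=1}^{k}a_m(z-z_0)^{-m}$ can be written as $\sum_{m=1}^{k}\alpha_m R_m(z/z_0)+(\text{a function }\rho\text{ with }e^\rho\in\langle\text{listed generators}\rangle)$ for suitable $\alpha_m\in\CC$, and exponentiating gives $e^{\sum a_m(z-z_0)^{-m}}$ as a product of the $e^{\alpha_m R_m(z/z_0)}$ and listed meromorphic/polynomial-exponential generators.

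Assembling: $f=R_0e^{R_1}$, decompose $R_1$ into polynomial-in-$z$ part, polynomial-in-$1/z$ part, constant, and finite-pole principal parts; exponentiate to get a product of $R_0\in\cM(\overline{\CC})^*$, $e^{P(z)}$, $e^{P(1/z)}$, and one factor $e^{\sum_m a_m(z-z_j)^{-m}}$ per finite nonzero pole $z_j$; rewrite each such factor, by the triangularity of $R_1,\dots,R_k$ in $(z-z_j)^{-1}$, as a product of $e^{\alpha_m R_m(z/z_j)}$ and further listed generators. This exhibits $f$ in the subgroup generated by the listed functions; conversely each listed function is manifestly in $\cT$, so they generate. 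The main obstacle I anticipate is the bookkeeping in the triangularity step — precisely matching an arbitrary principal part $\sum a_m(z-z_0)^{-m}$ against the $R_m(z/z_0)$ while tracking the meromorphic (hence harmless) remainder, including being careful that the substitution $w=z/z_0$ versus the $z_0^k$-rescaling used elsewhere in the section is handled consistently, and that the pole at $z=0$ case (giving $e^{P(1/z)}$) and the point $z_0=\infty$ contributions do not slip through; but none of this is deep, it is just careful linear algebra over $\CC$ in the local variable $(z-z_0)^{-1}$.
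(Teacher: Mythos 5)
Your proposal is correct and follows essentially the same route as the paper: reduce to $f=R_0e^{R_1}$ via Theorem \ref{thm:transalgebraic_Riemann_sphere}, split $R_1$ into its polar parts, and use the fact that $R_k(z/z_0)$ has a pole of exact order $k$ at $z_0$ (Proposition \ref{prop:Euler_rat}) to match any finite principal part by a triangular linear combination, the remainder being a constant. Your explicit triangularity bookkeeping is just a spelled-out version of the paper's one-line assertion that the $R_k(z/z_0)$ ``can be used to reconstruct any polar part in $\CC^*$.''
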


\begin{proof}

Using Theorem \ref{thm:transalgebraic_Riemann_sphere} any function $f\in \cT(\overline{\CC})$ is of the form
$f=R_0e^{R_1}$ with $R_0, R_1 \in \CC$, $R_0\not= 0$. 
By Proposition \ref{prop:Euler_rat}
the rational functions $R_k(z/z_0)$ can be used to reconstruct any polar part in $\CC^*$, so we can find a finite linear 
combination of functions $R_k(z/z_0)$ such that 
$$
R_1-\alpha_0 P_0(1/z) -\alpha_\infty P_\infty(z)-\sum_{z_0}\sum_{k=1}^{k(z_0)} \alpha_{k,z_0} R_k(z/z_0)
$$
has no poles in the Riemann sphere, hence it is a constant. 
Therefore, we have that 
$$
f.\prod_{k, z_0} e^{-\alpha_{k,z_0} R_k(z/z_0)} e^{-\alpha_0 P_0(1/z)} e^{-\alpha_\infty P_\infty(z)}  \in \CC(z)^*
$$
is a non-zero rational function and the result follows.
\end{proof}

\medskip
\textbf{Extension of the e\~ne product to $\cT$.}
\medskip

In view of the previous factorization given by Theorem \ref{thm:transgenerators}, and the definition of the e\~ne product from 
\cite{PM1}, we already have the e\~ne product in the subgroup of $\cT$ of elements without a pole or singularity at $0$. 
Using the projective invariance from Theorem 11.4 from \cite{PM1} for rational functions,
$$
f(1/z)\star g(1/z) = f\star g (1/z)
$$
we can extend the exponential form  of the e\~ne product to Laurent developments in the exponential at $0$, i.e. if
\begin{align*}
f(z) &=e^{F(z)} \\ 
g(z) &=e^{G(z)} 
\end{align*}
with 
\begin{align*}
F(z) &=\sum_{k\in \ZZ} F_k z^k \\ 
G(z) &=\sum_{k\in \ZZ} G_k z^k
\end{align*}
then we have
$$
f\star g(z) = e^{H(z)}
$$
with
$$
H(z)=- \sum_{k\in \ZZ} kF_k G_k z^k = F\star_e G (z)
$$
where we denote $\star_e$ the extension of the linearized exponential form of the e\~ne product
$$
F\star_e G (z)=-\sum_{k\in \ZZ} kF_k G_k \, z^k
$$
With this definition we extend the e\~ne product to the full group $\cT/\CC^* =\bar \cT$, i.e. $\cT$ modulo non-zero constants.
Note that the e\~ne product with the constant $1$ is the constant $1$ that plays the role of additive zero in the following ring structure,

\begin{theorem}
$( \bar \cT, . ,\star)$ is a commutative ring.
\end{theorem}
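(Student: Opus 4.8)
The plan is to verify the ring axioms for $(\bar\cT,\cdot,\star)$ by reducing everything to the linearized exponential form $\star_e$ acting on Laurent data, where all the axioms become transparent polynomial identities. First I would recall from Theorem \ref{thm:transalgebraic_Riemann_sphere} that every $f\in\cT$ can be written $f=R_0e^{R_1}$ with $R_0,R_1\in\CC(z)$, $R_0\neq 0$; hence modulo constants every element of $\bar\cT$ is represented, at $0$, by a finite-tailed Laurent series $F(z)=\sum_{k\in\ZZ}F_kz^k$ via $f=e^{F}$ (the point being that $R_0$ and $R_1$ are rational, so $\log f$ has a genuine Laurent expansion at $0$ once we localize; if $0$ is a zero/pole of $R_0$ the $z^n$ factor contributes the $n\log z$ term, but this does not affect the coefficients $F_k$ with $k\in\ZZ$ entering $\star_e$ — only the well-definedness modulo $\CC^*$ matters, and the additive structure is just addition of these Laurent series). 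So I would first check that $\bar\cT$ with multiplication of functions is an abelian group: this is Proposition ``$\cT(X)$ is a group'' together with the fact that constants form a subgroup, and the identity element of $(\bar\cT,\cdot)$ is the class of $1$.

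Next I would establish that $\star$ is well-defined on $\bar\cT$ and commutative and associative. Well-definedness: the exponential form shows $f\star g=e^{F\star_e G}$ depends only on $F,G$ modulo constants (the $k=0$ term drops out of $-\sum k F_kG_k z^k$), and it lands in $\cT$ because $F\star_e G$ is again a finite-tailed Laurent series — indeed if $F$ has poles of order $\le a$ at $0$ and $G$ poles of order $\le b$, then $F\star_e G$ has a pole of order $\le\max(a,b)$, and more importantly $e^{F\star_e G}$ is a product of terms of the forms covered by Theorem \ref{thm:transgenerators}, so it is genuinely transalgebraic; one should also confirm the construction is independent of the choice of local chart at $0$, which follows from the projective invariance $f(1/z)\star g(1/z)=f\star g(1/z)$ already invoked in the text for rational functions and extended by continuity/density using Theorem \ref{thm:main}. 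Commutativity is immediate from $-kF_kG_k=-kG_kF_k$. Associativity likewise: both $(F\star_e G)\star_e K$ and $F\star_e(G\star_e K)$ have $k$-th coefficient $k^2F_kG_kK_k$ up to sign bookkeeping — careful, $F\star_e G$ has $k$-th coefficient $-kF_kG_k$, so $(F\star_e G)\star_e K$ has $k$-th coefficient $-k(-kF_kG_k)K_k=k^2F_kG_kK_k$, and symmetrically for the other grouping, so they agree. The multiplicative identity for $\star$ is the class of $1-z$, since $R_1(z)$... rather, one checks $e^{R_1(z/z_0)}\star$ gives the convolution formula; the genuine two-sided $\star$-unit in $\bar\cT$ is $e^{-z/(1-z)}$ or more simply the element whose Laurent exponent is $F_k=-1/k$ for all $k\neq 0$ — I would identify it explicitly as $(1-z)^{-1}\cdot(\text{something})$, but on $\bar\cT$ one can take the class of $e^{\sum_{k\neq 0}(-1/k)z^k}$; concretely this is the class of the function realizing the identity, and the text's remark ``the e\~ne product with the constant $1$ is the constant $1$'' identifies the additive zero, so the $\star$-unit is separate and must be pinned down.

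Finally I would check the distributive law: for $f=e^F$, $g=e^G$, $h=e^K$ in $\bar\cT$, the product $f\cdot g$ in $\bar\cT$ corresponds to the Laurent series $F+G$, so $(f\cdot g)\star h$ corresponds to $(F+G)\star_e K$, whose $k$-th coefficient is $-k(F_k+G_k)K_k=-kF_kK_k-kG_kK_k$, which is exactly the $k$-th coefficient of $(F\star_e K)+(G\star_e K)$, i.e. of $(f\star h)\cdot(g\star h)$. So distributivity is the bilinearity of $(F,G)\mapsto -\sum kF_kG_k z^k$, which is manifest. The remaining bookkeeping is to confirm that each intermediate object stays inside $\cT$ (not merely inside formal Laurent series), which is where Theorems \ref{thm:transalgebraic_Riemann_sphere} and \ref{thm:transgenerators} do the work: sums, $\star_e$-images, and negatives of the allowed exponents $\{P(z),\,P(1/z),\,\alpha R_k(z/z_0)\}$ stay in the span of allowed exponents, because $R_k\star_e R_\ell$-type computations reproduce $R_{k+\ell}$ (Proposition \ref{prop:Euler_rat} and the proposition on $\star$-powers of $e^{z/(z_0-z)}$) and because $\star_e$ of a polynomial part with anything rational is again rational. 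The main obstacle I anticipate is precisely this closure/well-definedness point — verifying that $\star$ defined via Laurent data at $0$ is genuinely independent of all choices and always produces an honest transalgebraic function, rather than just a formal series, and reconciling the two candidate ``identities'' (the constant $1$ as additive zero versus the $\star$-multiplicative unit). Once the generating set of Theorem \ref{thm:transgenerators} is shown to be closed under $\star_e$ at the level of exponents, the ring axioms themselves are routine bilinear identities.
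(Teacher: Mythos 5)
Your proposal is correct and follows essentially the same route the paper takes (implicitly, since the paper offers no written proof): reduce every axiom to the bilinearity and symmetry of the linearized form $F\star_e G=-\sum_k kF_kG_k z^k$, and use the generators of Theorem \ref{thm:transgenerators} to see that $\star$ stays inside $\cT$. One slip worth correcting: $e^{-z/(1-z)}$ has exponent coefficients $F_k=-1$, not $-1/k$, so it is not the $\star$-unit; the element with $F_k=-1/k$ for $k\geq 1$ is $1-z$ (exponent $\log(1-z)$), which is the unit of \cite{PM1} on series normalized at $0$, and the paper itself leaves the unit question for the full Laurent extension unaddressed, so your closing caveat that it ``must be pinned down'' is apt rather than a gap.
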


The e\~ne product is clearly continuous on the generators of the transalgebraic class for the 
topology defined in Section \ref{sec:eule}. We get

\begin{theorem}
$( \bar \cT, . ,\star)$ is a topological commutative ring.
\end{theorem}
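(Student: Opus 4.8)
The plan is to show that the two ring operations are continuous for the topology introduced in Section~\ref{sec:comp}. Since by the previous theorem $(\bar\cT,\cdot,\star)$ is already a commutative ring, and since we have shown that $\cM(\overline{\CC})^*$, $\cT(\overline{\CC})$ and $\cF(\overline{\CC})$ are topological groups, the multiplication $\cdot$ (the additive law of the ring) and the additive inversion are continuous; it remains only to prove that the e\~ne product
$$
\star \colon \bar\cT \times \bar\cT \too \bar\cT
$$
is jointly continuous. First I would pass to the divisor side: on $\overline{\CC}$ the map $\Div$ induces a homeomorphism from $\bar\cT$ onto the group of transalgebraic divisors realized by transalgebraic functions (two such functions with the same divisor on the compact surface $\overline{\CC}$ differ by a constant, and, by Rouch\'e's theorem, uniform convergence off the support forces the algebraic multiplicities, the transalgebraic orders $d_\rho$, and the integer residues $n_\rho$ to stabilize). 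Under this identification $\star$ becomes the explicit convolution-type operation on divisors of Section~\ref{sec:infi}: algebraic points multiply and their contributions obey the convolution formula of \cite{PM1}, while transalgebraic points multiply, $\rho\mapsto \rho\rho'$, and their orders add, $d_\rho\mapsto d_\rho+d_{\rho'}$, as recorded by the composition rule $e^{R_{k_1}(z/z_1)}\star\cdots\star e^{R_{k_n}(z/z_n)} = e^{R_{k_1+\cdots+k_n}(z/(z_1\cdots z_n))}$. So the statement reduces to continuity of this operation on divisors (together with the uniform control of the functions off the support).

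The continuity on generators is then the routine part. Using that $\star$ is distributive over $\cdot$, and writing $f=R_0e^{R_1}$, $g=S_0e^{S_1}$ as in Theorem~\ref{thm:transalgebraic_Riemann_sphere} and decomposing $R_0$, $S_0$ and the polar parts of $R_1$, $S_1$ over the elementary generators of Theorem~\ref{thm:transgenerators} (non-zero rational functions, $e^{P(z)}$, $e^{P(1/z)}$, and $e^{\alpha R_k(z/z_0)}$) via Proposition~\ref{prop:Euler_rat}, the product $f\star g$ is a finite $\cdot$-product of e\~ne products of pairs of generators, each given by a closed formula: between two exponential generators by the composition rule above together with the bilinearity of $\star_e$ in the scalars; between a rational and an exponential generator by the convolution formula $e^{z/(z_0-z)}\star h=\prod_\alpha e^{z/(z_0\alpha-z)}$ and its consequences; between two rational functions by the universal polynomials $C_n$ of \cite{PM1}. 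In each case the output and its divisor depend continuously on the parameters $z_0,z_1,\alpha,\beta$ and on the zeros and poles of the rational factors — this is the content of the remark preceding the theorem that $\star$ ``is clearly continuous on the generators''.

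The genuinely delicate point, which I expect to be the main obstacle, is that a presentation of $f\in\bar\cT$ as a finite $\cdot$-product of generators is not unique and its number of factors is not locally constant: a convergent sequence of rational functions may have an unbounded number of zeros and poles collapsing into a single exponential singularity of the limit (the Euler phenomenon $f_k=R_0(1+R_1/k)^k\to R_0e^{R_1}$, and more generally the situation analyzed in Theorem~\ref{thm:main}), so one cannot simply invoke continuity of a fixed finite composition. I would handle this by the local analysis of Section~\ref{sec:comp}: if $f_n\to f$ and $g_n\to g$, a colliding packet of zeros and poles of $f_n$ producing an exponential singularity $\rho$ of $f$ has asymptotically constant signed total multiplicity while the individual orders blow up, and after e\~ne-multiplying with a zero or pole $\sigma$ of $g_n$ the resulting packet of $f_n\star g_n$ near $\rho\sigma$ keeps exactly this structure; one reads this off the logarithmic derivative through the linearized exponential form $\log(f\star g)=-\sum_{k\in\ZZ}kF_kG_kz^k$, which shows simultaneously that $d\log(f_n\star g_n)\to d\log(f\star g)$ uniformly off the convoluted support and that the pole order of $d\log(f\star g)$ at $\rho\sigma$ is the expected $d_{\rho\sigma}(f\star g)+1$. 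The remaining subtlety is that multiplicative convolution of divisors produces cancellations among algebraic contributions that are invisible to the Hausdorff distance of the supports but visible on the logarithmic derivative and on the divisor as a signed object; I would therefore carry out the convergence argument through $d\log$ (equivalently, through the divisor with its signs) rather than through the bare support, and use the same Rouch\'e/Hurwitz bookkeeping as in the proof of Theorem~\ref{thm:main} to match orders and residues. Assembling these facts gives $f_n\star g_n\to f\star g$, hence joint continuity of $\star$, so $(\bar\cT,\cdot,\star)$ is a topological commutative ring.
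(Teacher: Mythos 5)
Your overall strategy---prove continuity of $\star$ on the generators of Theorem \ref{thm:transgenerators} and propagate it through distributivity over the multiplication---is the same as the paper's, whose entire justification is the one sentence preceding the statement (the e\~ne product ``is clearly continuous on the generators''). You go further than the paper in trying to make this into a proof, but two of your steps do not hold. First, the transalgebraic divisor does \emph{not} determine an element of $\bar\cT$ up to a constant: $e^{z/(1-z)}$ and $e^{2z/(1-z)}$, and more generally $e^{\alpha R_k(z/z_0)}$ for different $\alpha$, all have the same divisor $k\cdot(z_0^k)_\infty$, because $\Div_\infty$ records only the location and the order $d_\rho$ of the exponential singularity, not the coefficients of the polar part of $d\log f$. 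So $\Div$ is not injective on $\bar\cT$ and the proposed reduction ``to the divisor side'' collapses at the outset; one must work with $d\log f$ itself, as in the proof of Theorem \ref{thm:main}.

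Second, and more seriously, the ``genuinely delicate point'' you flag is not just delicate: with the topology as literally defined (Hausdorff convergence of $\supp(\Div(f_k))$ to $\supp(\Div(f))$, plus uniform convergence off $\supp(\Div(f))$), joint continuity of $\star$ already fails for rational functions whenever the multiplicative convolution of divisors has a cancellation. Take $f=(1-z/a)(1-z/b)^{-1}$ and $g=(1-z/c)(1-z/d)$ with $ac=bd$ and the other pairwise products distinct; then $\Div(f\star g)=(ad)-(bc)$ and $\supp(\Div(f\star g))=\{ad,bc\}$, whereas if $f_k$ is obtained by moving the zero $a$ to $a_k\to a$, then $f_k\star g$ has a pole at $bd$ for every $k$, its support Hausdorff-converges to the three-point set $\{ac,ad,bc\}$, and $f_k\star g$ cannot converge uniformly on compact neighborhoods of $bd\notin\supp(\Div(f\star g))$. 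Your closing suggestion to run the argument through $d\log$ instead of the bare support changes the bookkeeping but not the topology in which continuity has to be verified, so it does not dispose of this obstruction. Before the generator-by-generator computation you describe can be assembled into joint continuity, one has to either exclude such cancellation configurations or weaken the convergence requirement on supports (e.g. ask only that $\supp(\Div(f))$ be contained in the Hausdorff limit of $\supp(\Div(f_k))$); as written, neither your argument nor the paper's one-line assertion addresses this.
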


Also it is clear that we have for $n,m\geq 0$,
$$
\bar \cT_n \star \bar \cT_m \subset \bar\cT_{nm}
$$
hence we have 
\begin{theorem}
$( \bar \cT, . ,\star)$ is a graded topological commutative ring.
\end{theorem}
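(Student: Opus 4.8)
The plan is to reduce everything to results already in hand: the two preceding theorems give that $(\bar\cT,\,\cdot\,,\star)$ is a topological commutative ring, so the only new content is that the filtration $(\bar\cT_n)_{n\geq 0}$ is compatible with both operations, i.e. that $\bar\cT_n\cdot\bar\cT_m\subset\bar\cT_{n+m}$ (for the additive structure, which is the function product) and $\bar\cT_n\star\bar\cT_m\subset\bar\cT_{nm}$ (for the multiplicative structure, which is the e\~ne product), and that these are continuous — but continuity is already subsumed in the topological-ring statement, so nothing extra is needed there.

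For the additive compatibility I would argue directly on divisors. If $f\in\cT_n$ and $g\in\cT_m$, then every zero, pole or exponential singularity of $fg$ lies in $\supp(\Div f)\cup\supp(\Div g)$, since multiplying functions can only make points of the divisor cancel, never create new ones; hence $|\supp(\Div(fg))|\leq n+m$. Moreover at a point $\rho$ that is an exponential singularity of $fg$, the polar part of $d\log(fg)=d\log f+d\log g$ has order at most $\max(d_\rho(f)+1,d_\rho(g)+1)<+\infty$, so by Lemma \ref{lemma2} (equivalently by the normal form $R_0e^{R_1}$ of Theorem \ref{thm:transalgebraic_Riemann_sphere}) the function $fg$ is again transalgebraic, with all exponential singularities of finite order. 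Therefore $fg\in\cT_{n+m}$, and passing to the quotient by $\CC^*$ gives $\bar\cT_n\cdot\bar\cT_m\subset\bar\cT_{n+m}$.

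For the multiplicative compatibility $\bar\cT_n\star\bar\cT_m\subset\bar\cT_{nm}$ — the inclusion displayed just before the statement — I would reduce to generators. By Theorem \ref{thm:transgenerators} every element of $\cT$ is a finite product of the elementary generators ($\cM(\overline\CC)^*$, $e^{P(z)}$, $e^{P(1/z)}$ with $P\in\CC[z]$, and $e^{\alpha R_k(z/z_0)}$), and by distributivity of $\star$ over the function product $\cdot$ (an axiom of the ring structure already established) it suffices to e\~ne-multiply generators pairwise. Here one invokes the explicit formulas already proved: $e^{\alpha_1R_{k_1}(z/z_1)}\star e^{\alpha_2R_{k_2}(z/z_2)}=e^{\alpha_1\alpha_2R_{k_1+k_2}(z/(z_1z_2))}\in\cT_1$; $e^{P(z)}\star e^{Q(z)}$ and $e^{P(z)}\star e^{\alpha R_k(z/z_0)}$ are again of the form $e^{(\mathrm{polynomial})}\in\cT_1$; the mixed products $e^{P(z)}\star e^{Q(1/z)}$ and $e^{Q(1/z)}\star e^{\alpha R_k(z/z_0)}$ equal the additive unit $1\in\cT_0$, because $F\star_e G$ annihilates the pairing of a series in nonnegative powers of $z$ against one in nonpositive powers; and the pairings of $\cM(\overline\CC)^*$ against the transcendental generators are handled by the convolution formula of \cite{PM1} together with $\cM_n\star\cM_m\subset\cM_{nm}$, the support of the convolved divisor being contained in the product set $\{ab\}$ of cardinality at most $nm$. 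Collecting these, a product of at most $n$ degree-$\leq1$ generators, e\~ne-multiplied with a product of at most $m$ of them, distributes into at most $nm$ terms each lying in $\cT_1\cup\cT_0$, whose function product lies in $\cT_{nm}$; again quotienting by $\CC^*$ gives the claim.

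Combining: the filtration $(\bar\cT_n)$ consists of the (closed) subspaces already used, the operations $\cdot$ and $\star$ are continuous by the topological-ring theorem, and the two inclusions above show the filtration is graded for both structures. I expect the only delicate point to be the bookkeeping in the second inclusion: when transcendental generators carrying a singularity at $0$ or $\infty$ meet meromorphic factors with zeros or poles at $0$ or $\infty$, one must apply the extended ("Laurent in the exponential at $0$") e\~ne product and the projective invariance $f(1/z)\star g(1/z)=f\star g(1/z)$ carefully so that no spurious point enters the support — but this is exactly the content of the formulas already established, so the difficulty is organizational rather than substantive.
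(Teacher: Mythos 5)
The paper gives essentially no proof of this theorem: it simply asserts the inclusion $\bar \cT_n \star \bar \cT_m \subset \bar\cT_{nm}$ as clear and appends ``graded'' to the preceding topological-ring statement. Your proposal therefore does strictly more than the paper, and most of it is sound: the additive compatibility $\bar\cT_n\cdot\bar\cT_m\subset\bar\cT_{n+m}$ via $d\log(fg)=d\log f+d\log g$ and Lemma \ref{lemma2} is correct, and your pairwise e\~ne products of generators (including the vanishing of the mixed products $e^{P(z)}\star e^{Q(1/z)}$ and $e^{Q(1/z)}\star e^{\alpha R_k(z/z_0)}$, since $\star_e$ only pairs coefficients of equal exponents) are all verified correctly.

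The one step that does not work as written is the final count. An element of $\cT_n$ is \emph{not} a product of at most $n$ ``degree-$\leq 1$ generators'': the rational part $R_0$ in the normal form $R_0e^{R_1}$ factors into as many linear factors as its degree counted \emph{with} multiplicity, which is unbounded on $\cT_n$ (e.g. $(1-z)^N\in\cM_2$ for every $N$), and each linear factor $1-z/a$ already lies in $\cM_2$, not in $\cT_1$. So ``at most $n$ generators against at most $m$ generators gives at most $nm$ terms'' does not deliver the bound. The correct source of the bound is the observation you already make for the purely meromorphic case and should promote to the main argument: the support of $\Div(f\star g)$ is contained in the multiplicative product set $\supp(\Div(f))\cdot\supp(\Div(g))$ (with the conventions at $0$ and $\infty$ coming from the Laurent/projective extension), whose cardinality is at most $nm$ independently of multiplicities and of how many elementary factors appear; the generator computations are then only needed to check that each point of that product set carries an ordinary or a finite-order exponential singularity, i.e. that $f\star g$ is again transalgebraic. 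With that replacement the argument closes. A terminological remark: since $\cT_n\subset\cT_{n+1}$, what is actually proved is that $(\bar\cT,.,\star)$ is a \emph{filtered} ring, with $.$ additive and $\star$ multiplicative on the filtration indices; this is also the sense in which the paper uses the word ``graded''.
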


\medskip
\textbf{Remark on further algebraization.}
\medskip

We can observe that if we consider a subfield $\QQ\subset K\subset \CC$ (for example a number field $K$), we can define $\cT(\PP^1(K))$ as the sub-group of $\cT$ with 
functions with exponential singularities at places in $\PP^1(K)\subset \PP^1(\CC)$ such that $d \log f \in \Omega^1_{/K}(\PP^1(K))$.
Then, since the Euler rational functions have rational coeffidints, $R_k(z)\in \QQ(z)$, we can check that the proof of Theorem \ref{thm:transgenerators} goes through
(the coefficients $(\alpha_{k,z_0})$ are elements of $K$), and the e\~ne product extends to $\bar \cT(\PP^1(K))=\cT(\PP^1(K))/K^*$ and 
$$
(\bar \cT(\PP^1(K)), ., \star)
$$
is a graded topological commutative ring.

For an algebraic curve $X$ defined over a field $K$ we can also define $\cT(X,K)$, the transalgebraic class over $K$, and so on. 

One of the magic in Euler's computations is its symbiotic analytic-algebraic content. Thus, it is not surprising 
that the e\~ne structures allow such algebraizations. We leave these rich  algebraic extensions for future articles.

\medskip

\textbf{E\~ne poles.}

\medskip

Once we have defined zeros of infinite order at finite places,
it is natural to ask for the definition of poles of infinite
order at finite places. 
Polylogarithm functions appear then
naturally. Recall that polylogarithms are defined for $k\geq 1$ by
$$
\Li_k(z)=\sum_{n=1}^{+\infty} n^{-k} z^n  \ .
$$
We have $\Li_k(0)=0$, the series has radius of convergence $1$, and we have 
a singularity at $1$ which is a branching point. On the other sheets of its log-Riemann surface we have 
branchings at $0$ and $1$ for $k\geq 2$, and only at $1$ for $k=1$ since 
$\Li_1(z)=-\log (1-z)$ (see for example \cite{Oesterle1993} for these geometric information 
and more properties of polylogarithms).

We can complete the sequence $(R_k)_{k\geq 1}$ of Euler rational function to indexes 
$k<0$ using the differential recurrence \ref{eq:recurrence} and the condition $R_k(0)=0$, and we get
$$
R_0(z)=\Li_1(z)=-\log(1-z)
$$
and for $k\leq 0$,
$$
R_k(z)=\Li_k(z)
$$
\begin{proposition}
We have
$$
e^{R_k(z)} \star e^{-Li_{k+1}(z)} =1-z \ .
$$
\end{proposition}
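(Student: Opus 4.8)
The plan is to pass to the linearized exponential form $\star_e$ of the e\~ne product, where the statement collapses to a one-line computation with power-series coefficients. First I would record that both functions in play are power series vanishing at $0$. Extending the defining relation $R_{k+1}=z\,dR_k/dz$ with $R_k(0)=0$ downwards, one gets, for every $k\in\ZZ$,
$$R_k(z)=-\sum_{n\geq 1} n^{k-1} z^n ,$$
a holomorphic function on the unit disc (for $k\leq 0$ this is just $-\Li_{1-k}(z)$; in particular $R_0(z)=\log(1-z)$, the $\star_e$-unit), while by definition
$$\Li_{k+1}(z)=\sum_{n\geq 1} n^{-k-1} z^n .$$
Writing $e^{R_k}=e^F$ and $e^{-\Li_{k+1}}=e^G$, the exponential-form coefficients are $F_n=-n^{k-1}$ and $G_n=-n^{-k-1}$ for $n\geq 1$, with $F_n=G_n=0$ for $n\leq 0$, so the extension of $\star_e$ to Laurent data at $0$ plays no role here.

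Next I would invoke the exponential form of the e\~ne product, $e^F\star e^G=e^{F\star_e G}$ with $F\star_e G(z)=-\sum_{n\geq 1} n F_n G_n\, z^n$, and compute the coefficients directly:
$$n F_n G_n = n\,(-n^{k-1})(-n^{-k-1}) = n^{-1},$$
so that
$$R_k\star_e(-\Li_{k+1})(z) = -\sum_{n\geq 1}\frac{z^n}{n} = \log(1-z) .$$
Exponentiating gives $e^{R_k}\star e^{-\Li_{k+1}} = e^{\log(1-z)} = 1-z$, which is the assertion; and the computation is uniform in $k\in\ZZ$.

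I do not expect any real obstacle: the whole content is the coefficient identity $n\cdot n^{k-1}\cdot n^{-k-1}=n^{-1}$. The only points worth a sentence each are (i) that the identity is asserted at the level of holomorphic functions on $|z|<1$, equivalently of formal power series, since $e^{-\Li_{k+1}}$ and, for $k\leq 0$, also $e^{R_k}$ are not transalgebraic, so one uses the formal/analytic linearized form $\star_e$ rather than the divisor description of $\star$; and (ii) that $e^{\log(1-z)}=1-z$ is the $\star$-identity, equivalently that $R_0=\log(1-z)$ is the $\star_e$-unit, which is immediate from $-n\cdot(-n^{-1})=1$. I would close by remarking that the proposition exhibits $e^{-\Li_{k+1}}$ as the $\star$-inverse of the infinite-order e\~ne zero $e^{R_k}=(e^{R_1})^{\star k}$, which is precisely how the polylogarithm hierarchy enters the description of e\~ne poles.
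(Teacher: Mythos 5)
Your proof is correct and takes essentially the same route as the paper: pass to the linearized exponential form and verify the coefficient identity $-nF_nG_n=-n\cdot n^{k-1}\cdot n^{-k-1}=-n^{-1}$, giving $R_k\star_e(-\Li_{k+1})=\log(1-z)$ and hence $e^{R_k}\star e^{-\Li_{k+1}}=1-z$. The paper states this in one line; you merely spell out the computation, and your side remarks are sound (the identity is uniform in $k\in\ZZ$, and $R_0=\log(1-z)=-\Li_1$ is the correct normalization consistent with the recurrence $R_{k+1}=z\,dR_k/dz$).
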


\begin{proof}
This follows directly from the exponential form, for $k\geq 0$,
$$
R_k\star_e (- \Li_{k+1})=\log (1-z) \ .
$$
\end{proof}
So we have
$$
\left (1-\frac{z}{1}\right )^{k. \infty} \star e^{- \Li_{k+1}(z)} =1-z .
$$
and it is natural to define the e\~ne-pole of infinite order $k\geq 1$ at $1$
as
$$
\left (1-\frac{z}{1}\right )^{-k. \infty } = e^{- \Li_{k+1}(z)}
$$
More generally, we define the pole of infinite order
$k$ ``at'' a finite place $z_0\not= 0, \infty$ as
\begin{definition} For $z_0\in \CC^*$, we symbolically define
$$
\left (1-\frac{z}{z_0}\right )^{-k. \infty } = e^{- \Li_{k+1}(z z_0^k)} \ ,
$$
as the function with divisor a single pole of infite order $k$ at $z_0\in \CC^*$ (but the singularity is at $z_0^{-k}$).
\end{definition}

Note that this time these functions  don't have  exponential singularities, but  branching 
singularities with non-trivial monodromy.  The function with a single
$k$-infinite pole at $z_0$ has a branching point located at $z_0^{-k}$. 
These functions take the 
value $1$ at $0$ (in their principal determination) and the e\~ne product is well defined through the 
exponential form. The important observation is that these type of singularities do 
appear naturally. Thus, it is natural to extend the e\~ne product to functions with singularities 
with non-trivial monodromy. This will be treated in subsequent articles.

With these definition we conclude, always \`a la Euler,

\begin{theorem}
For $k,l \in \ZZ$, $z_1, z_2 \in \CC$, $z_1, z_2\not= 0,\infty$,
we have
$$
\left (1-\frac{z}{z_1} \right )^{k.\infty } \star
\left (1-\frac{z}{z_2} \right )^{l.\infty }
=\left (1-\frac{z}{z_1^k z_2^l} \right )^{(k+l).\infty }  \ \ .
$$
\end{theorem}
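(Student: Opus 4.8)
The plan is to reduce everything to the linearized exponential form $\star_e$ via the defining identities for e\~ne-zeros and e\~ne-poles of infinite order. Recall that for $k\geq 0$ we set $R_k=\Li_k$ for $k\leq 0$ (so $R_0(z)=\Li_1(z)=-\log(1-z)$), and that by definition
$$
\left(1-\frac{z}{z_0}\right)^{m.\infty}=e^{R_m(z/z_0^m)}\quad(m\geq 1),\qquad
\left(1-\frac{z}{z_0}\right)^{-m.\infty}=e^{-\Li_{m+1}(z z_0^m)}=e^{R_{-m}(z z_0^m)}\quad(m\geq 1),
$$
and $\left(1-\frac{z}{z_0}\right)^{0.\infty}=1-z=e^{R_0(z/z_0^0)}$ after the appropriate normalization. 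The uniform way to say this is: for every $k\in\ZZ$ and every $z_0\in\CC^*$,
$$
\left(1-\frac{z}{z_0}\right)^{k.\infty}=e^{R_k\!\left(z/z_0^{\,k}\right)},
$$
where $z_0^{\,k}$ is to be read as the location-shift already appearing in the two earlier definitions (positive exponent for $k>0$, negative for $k<0$, and $z_0^0=1$ in the degenerate case). First I would record this single unified formula, checking the three cases $k>0$, $k=0$, $k<0$ against the definitions in the excerpt; this is the only bookkeeping step and it is routine.

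Next I would apply the extended exponential form of the e\~ne product on $\bar\cT$, which says $e^{F}\star e^{G}=e^{F\star_e G}$ with $F\star_e G(z)=-\sum_{n\in\ZZ}n F_n G_n z^n$. Writing $F(z)=R_k(z/z_1^{\,k})$ and $G(z)=R_l(z/z_2^{\,l})$, the substitution $z\mapsto z/a$ in a series multiplies its $n$-th coefficient by $a^{-n}$, so if $R_k(z)=\sum_n r^{(k)}_n z^n$ (with $r^{(k)}_n=-n^{k-1}$ for $n\geq 1$ in the classical range, and the polylogarithm coefficients $n^{-|k|-1}$ up to sign in the negative range) then $F_n=r^{(k)}_n z_1^{-kn}$ and $G_n=r^{(l)}_n z_2^{-ln}$. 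Hence
$$
F\star_e G(z)=-\sum_{n}n\,r^{(k)}_n r^{(l)}_n\,(z_1^{\,k}z_2^{\,l})^{-n}z^n=(R_k\star_e R_l)\!\left(z/(z_1^{\,k}z_2^{\,l})\right).
$$
So the whole statement collapses to the single scalar identity $R_k\star_e R_l=R_{k+l}$ for all $k,l\in\ZZ$, followed by substituting $z\mapsto z/(z_1^k z_2^l)$ and re-expressing $e^{R_{k+l}(z/(z_1^k z_2^l))}$ as $\left(1-\frac{z}{z_1^k z_2^l}\right)^{(k+l).\infty}$ via the unified formula from the first step — noting that $(z_1^k z_2^l)^{\,k+l}$ is exactly the location-shift prescribed, so consistency is automatic.

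It therefore remains to prove $R_k\star_e R_l=R_{k+l}$ on all of $\ZZ$. In the exponential form, $R_k\star_e R_l$ has $n$-th coefficient $-n\cdot r^{(k)}_n r^{(l)}_n$. For $n\geq 1$ all the $R_j$ ($j\in\ZZ$) have $n$-th coefficient $-n^{j-1}$ (this is literally $R_j(z)=-\sum_{n\geq 1}n^{j-1}z^n$, valid for $j\leq 0$ too since then $n^{j-1}=n^{-(|j|+1)}$ and $R_j=\Li_j$ gives $\Li_j(z)=\sum n^{-j}z^n=-R_j$... wait, sign: for $j\le 0$, $R_j=\Li_{|j|}$? — I would pin down the sign convention here, but the upshot is that each $R_j$ has $n$-coefficient $\pm n^{j-1}$ with a globally consistent sign $-1$), so $-n\cdot(-n^{k-1})(-n^{l-1})=-n^{k+l-1}$, which is precisely the $n$-th coefficient of $R_{k+l}$. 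For $n\leq 0$ all coefficients vanish. Thus the identity holds coefficientwise and hence as an identity of power series (or germs at $0$); this is already implicitly the content of the relation $R_k=R_1^{\star_e k}$ together with the Proposition $e^{R_k}\star e^{-\Li_{k+1}}=1-z$, i.e. $R_k\star_e R_0=R_k\star_e(-\Li_1)$... — the cleanest route is to cite $R_1^{\star_e k}=R_k$ for $k\geq 1$, the Proposition giving $R_k\star_e R_0=R_{k}$ is not quite it; rather $R_0$ is the neutral element and one checks $R_{-1}\star_e R_1=R_0$, extending the group law to negative indices by the explicit coefficient computation above. The main obstacle is purely notational: keeping the sign conventions for $\Li$ versus $R_k$ straight across the index $k=0$, and making sure the symbolic ``location'' $z_0^k$ in $\left(1-\frac{z}{z_0}\right)^{k.\infty}$ composes correctly (that $(z_1^k z_2^l)^{k+l}$ is the location of the product), which is exactly what the coefficient bookkeeping in the second paragraph verifies. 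Once those are nailed down, the theorem is immediate.
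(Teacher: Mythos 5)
Your overall strategy --- pass to the linearized exponential form, reduce to the coefficientwise identity $R_k\star_e R_l=R_{k+l}$ on $\ZZ$, and track the rescaling $z\mapsto z/a$ through $\star_e$ --- is exactly the intended argument (the paper offers no explicit proof beyond ``we conclude''), and your core computation is right: with the uniform convention $R_j(z)=-\sum_{n\geq 1}n^{j-1}z^n$ for all $j\in\ZZ$ (which is the consistent one; note in passing that it forces $R_0(z)=\log(1-z)=-\Li_1(z)$, so $e^{R_0}=1-z$ is the neutral element, and $R_{-m}=-\Li_{m+1}$), one gets $-n\cdot(-n^{k-1})(-n^{l-1})=-n^{k+l-1}$ and hence
$$
e^{R_k(z/a)}\star e^{R_l(z/b)}=e^{R_{k+l}(z/(ab))},
$$
so with $a=z_1^{\,k}$, $b=z_2^{\,l}$ the left-hand side equals $e^{R_{k+l}\left(z/(z_1^kz_2^l)\right)}$.

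The genuine gap is in your last identification, where you assert that ``$(z_1^kz_2^l)^{k+l}$ is exactly the location-shift prescribed, so consistency is automatic.'' It is not: the Definition reads $\left(1-\frac{z}{w}\right)^{m\cdot\infty}=e^{R_m(z/w^{m})}$, so taking $w=z_1^kz_2^l$ and $m=k+l$ literally would give $e^{R_{k+l}\left(z/(z_1^kz_2^l)^{k+l}\right)}$, which is \emph{not} what your computation produced unless $(z_1^kz_2^l)^{k+l}=z_1^kz_2^l$. What your computation actually shows is that the product is the $(k+l)$-fold infinite zero/pole whose singularity sits at the point $z_1^kz_2^l$, i.e.\ the right-hand symbol must be read as $e^{R_{k+l}(z/D)}$ with $D=z_1^kz_2^l$ the displayed denominator (equivalently, $\left(1-\frac{z}{w}\right)^{(k+l)\cdot\infty}$ for any $(k+l)$-th root $w$ of $z_1^kz_2^l$). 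You need to say this explicitly rather than claim automatic consistency --- it is the only nontrivial piece of bookkeeping in the theorem, and it also governs the degenerate case $k+l=0$, where your own computation gives the honest polynomial $1-\frac{z}{z_1^kz_2^l}$ (e.g.\ $k=1$, $l=-1$ yields $1-\frac{z z_2}{z_1}$), not the normalization-independent $1-z$ that a literal application of the Definition with $w^0=1$ would suggest. Apart from pinning down this reading of the symbol (and committing to the sign convention you correctly guessed instead of leaving it as an aside), the proof is complete.
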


\end{document}